\documentclass[english,15pt]{article}
\usepackage{hyperref}
\usepackage[T1]{fontenc}
\usepackage[latin1]{inputenc}
\usepackage{geometry}
\usepackage{float}
\usepackage{mathrsfs}
\usepackage{amsmath}
\usepackage{amssymb}
\usepackage{graphicx}
\usepackage{subcaption}
\usepackage{booktabs}
\usepackage{multirow}

\hypersetup{
    colorlinks=true,
    linkcolor=blue,
    filecolor=magenta,      
    urlcolor=cyan,
    citecolor = blue,
}

\makeatletter

\usepackage{algorithm}
 \usepackage{algorithmicx}

\usepackage{todonotes}

\usepackage{algorithm,algpseudocode}

\usepackage{amsthm}

\usepackage{mathrsfs}

\usepackage{amsfonts}

\usepackage{epsfig}

\usepackage{bm}

\usepackage{mathrsfs}

\usepackage{enumerate}

\numberwithin{equation}{section}

\@ifundefined{definecolor}{\@ifundefined{definecolor}
 {\@ifundefined{definecolor}
 {\usepackage{color}}{}
}{}
}{}

\newtheorem{prop}{Proposition}[section]

\newcounter{hypA}
\newenvironment{hypA}{\refstepcounter{hypA}\begin{itemize}
  \item[({\bf A\arabic{hypA}})]}{\end{itemize}}
\newcounter{hypB}

\newcounter{hypD}

\usepackage{babel}\date{}

\usepackage{babel}

\makeatother

\usepackage{babel}

\begin{document}

\begin{center}

{\Large \textbf{Unbiased Gradients for a Class of Conditional  \\ Stochastic Optimization Problems}}

\vspace{0.5cm}

BY  MIGUEL ALVAREZ$^{1}$  \& AJAY JASRA$^{2}$

{\footnotesize $^{1}$Applied Mathematics and Computational Science Program, Computer, Electrical and Mathematical Sciences and Engineering Division, King Abdullah University of Science and Technology, Thuwal, 23955-6900, KSA.}\\
{\footnotesize $^{2}$School of Data Science,  The Chinese University of Hong Kong,  Shenzen,  Shenzhen, CN.}\\
{\footnotesize E-Mail:\,} \texttt{\emph{\footnotesize miguelangel.alvarezballesteros@kaust.edu.sa}}, \\\texttt{\emph{\footnotesize ajayjasra@cuhk.edu.cn}}
\end{center}

\begin{abstract}
In this paper we consider the conditional stochastic optimization (CSO) problem.  This consists
of optimizing a function which can be written as the expectation of a function which is itself a function
of a conditional expectation,  i.e.~of the type $F(\xi) := \mathbb{E}\left[f\left(Z,\mathbb{E}[g(Z,X,\xi)|Z]\right)\right]$, where precise definitions are given in the main text.  We address a particular class of CSO problems where the joint law of the random variables $X,Z$ cannot be exactly sampled; this case has been addressed in \cite{goda}.  We introduce a method that combines Markovian stochastic approximation with unbiased approximation methods which allows one to find the optimizer of $F(\xi)$ in the context of interest.  We illustrate our methodology on two examples associated to parameter estimation with model averaging and portfolio selection associated to high-dimensional full factor multivariate stochastic volatility models.
\\
\noindent\textbf{Keywords}:  Conditional Stochastic Optimization; Markovian Stochastic Approximation; Unbiased Approximation. 
\end{abstract}

\section{Introduction}

We are given two random variables $(Z,X)\in\mathsf{Z}\times\mathsf{X}$ defined on a probability space $(\Omega,\mathcal{F},\mathbb{P})$ and a collection of parameters $\xi\in\Xi\subseteq\mathbb{R}^{k}$. Let $g:\mathsf{Z}\times\mathsf{X}\times\Xi\rightarrow\mathbb{R}^k$ and $f:\mathsf{Z}\times\mathbb{R}^k\rightarrow\mathbb{R}$ be appropriately
measurable.  
 The problem of interest is to compute the following optimal value(s):
$$
\min_{\xi\in\Xi} F(\xi) \quad \textrm{where} \quad F(\xi) := \mathbb{E}\left[f\left(Z,\mathbb{E}[g(Z,X,\xi)|Z]\right)\right].
$$
This is the conditional stochastic optimization (CSO) problem; see e.g.~\cite{hu,hu1}. The CSO problem  can be challenging primarily due to the computation of $F(\xi)$, the objective function, which has 
both an inner and outer expectation. 
Under standard assumptions, the gradient of $F$ w.r.t.~$\xi$ is 
$$
\nabla F(\xi) = \mathbb{E}\left[\mathbb{E}[\nabla g(Z,X,\xi)|Z]^{\top}\nabla f\left(Z,\mathbb{E}[g(Z,X,\xi)|Z]\right)\right].
$$
The objective is to compute an unbiased estimate of $\nabla F(\xi)$. This motivates the use of unbiased Monte Carlo for stochastic optimization problems.  The CSO problem has many applications such as in regression or meta-learning; see for instance the articles \cite{hu1, singh}.

The problem of unbiased estimation of $\nabla F(\xi)$ has been considered in \cite{goda} (see also \cite{goda2}) under the specific scenario that exact simulation from the joint law of $(Z,X)$ is possible.  We shall assume that the joint law of $(Z,X)$ will admit a density $\pi(z)\nu(x|z)$ w.r.t.~a dominating measure $dx\otimes dz$ and shall make the constraint that this density is known point-wise up-to a normalizing constant.
We shall consider the more complex problem where exact simulation of  $(Z,X)$  is not necessarily available and one must resort to Markov chain Monte Carlo (MCMC) methods,  that is,  one needs more advanced sampling techniques than independent sampling.   In this context there are a few methods that one could pursue,  for instance one is to find an exact unbiased estimator of $\nabla F(\xi)$ and plug that directly into a stochastic gradient (SG) method,  which is the typical approach; see for instance the work of \cite{ub_grad,ub_grad_bip} where one has to use quite advanced MCMC methods for some challenging model problems.  Whilst these type of methods can deliver exact,  up-to the constraints of SG methods, optimizers of $F$ the computation of an unbiased estimator of the gradient,  especially for the model problems we present is rather expensive.

The idea we follow is based upon the works of \cite{ub_par,ub_filt}.  In the article of \cite{ub_par} the authors recognized that for a certain type of optimization problem associated to diffusions and time-discretizations,  where exact simulation is not feasible,  one can adopt the well-known idea of Markovian stochastic approximation (MSA) (e.g.~\cite{andr_moulines1,andr}) and the bias (there time discretization) can still be removed.  MSA
simply uses a Markov kernel to facilitate a SG type method for optimization (amongst other things).
To obtain a trivially valid optimization scheme we then use the approach of \cite{ub_filt}
to obtain an unbiased estimate of 
$$
\mathbb{E}[\nabla g(Z,X,\xi)|Z]^{\top}\nabla f\left(Z,\mathbb{E}[g(Z,X,\xi)|Z]\right).
$$
In other words the $Z$ variable is sampled with MCMC and the inner term above is unbiasedly estimated at each time step of an MSA procedure. 
We prove that, under assumptions,  the estimator is indeed unbiased and of finite variance.  We investigate our methodology in two challenging problems.  The first is parameter estimation associated to state-space models 
(e.g.~\cite{cappe}) with model averaging.   The second problem is associated to portfolio selection when modeling financial data with a high-dimensional full factor multivariate stochastic volatility model as in \cite{mv_sv}.

This article is structured as follows. In Section \ref{sec:approach} our methodology is detailed along with a mathematical result on the unbiasedness and finite variance of our estimator.
In Section \ref{sec:numerics} we provide numerical examples which demonstrate our methodology. 
In Appendix \ref{app:prf_prop} the proof of our mathematical result in Section \ref{sec:approach} is given.

\section{Approach}\label{sec:approach}

\subsection{Markovian Stochastic Approximation}

The basic mechansim that we will use in Markovian stochastic approximation (MSA).  At this stage we shall assume that there exists an unbiased estimate of 
$$
H(Z,\xi) : = \mathbb{E}[\nabla g(Z,X,\xi)|Z]^{\top}\nabla f\left(Z,\mathbb{E}[g(Z,X,\xi)|Z]\right)
$$
denoted as $\widehat{H}(Z,\xi)$.
If one can sample exactly from $\nu(x|z)$ then there is a method in \cite{goda}.  Alternatively,  if there is only an MCMC method available then we detail an approach below.

Let $K:\mathsf{Z}\times\mathscr{Z}\rightarrow[0,1]$ be any $\pi-$stationary and ergodic Markov kernel,  where $\pi(z)dz$ is a probability measure on the measurable space $(\mathsf{Z},\mathscr{Z})$.  Let $(z_0,\xi_0)\in\mathsf{Z}\times \Xi$ be given, then the MSA method is as follows starting with $n=1$ and $\{\gamma_n\}_{n\geq 0}$ standard learning rate parameters that are used in MSA.
\begin{enumerate}
\item{Sample $Z_n|Z_{n-1}\sim K(z_{n-1},\cdot)$.}
\item{Update:
$$
\xi_n = \xi_{n-1} + \gamma_n\widehat{H}(z_n,\xi_{n-1})
$$
Set $n=n+1$ and return to the start of 1..}
\end{enumerate}

We remark that the above approach is essentially a regular MSA method,  as the randomness we will use to 
compute $\widehat{H}(Z,\xi)$ depends on $Z,\xi$ and generated in an independent fashion of all other random variables in the algorithm.  As such the convergence theory for this approach is the standard one,  that can be found for instance in \cite{andr_moulines1}.

\subsection{Unbiased Estimate of $H(Z,\xi)$}
\label{subsec:alg}
For any $z\in\mathsf{Z}$ let $\eta(x|z)dx$ be a probability measure on $(\mathsf{X},\mathscr{X})$ where $\eta$ is strictly positive and $M_z:\mathsf{X}\times\mathscr{X}\rightarrow[0,1]$  be an ergodic Markov kernel that admits $\nu(x|z)dx$ as its stationary measure.
 For $N\in\mathbb{N}$ consider the following procedure:
\begin{enumerate}
\item{Sample $X_0|z\sim \eta(\cdot|z)$. Set $n=1$.}
\item{Sample $X_n|x_{n-1},z\sim M_z(x_{n-1},\cdot)$. Set $n=n+1$ and if $n=N+1$ stop and return
$$
\nu_z^N(dx) := \frac{1}{N+1}\sum_{n=0}^{N}\delta_{\{x_n\}}(dx)
$$
otherwise return to the start of 2..}
\end{enumerate}
We call the above procedure \textbf{MC}.  We use the notation for any function $\alpha:\mathsf{X}\rightarrow\mathbb{R}^d$ (to be read as a column vector):
$$
\nu_z^N(\alpha) = \frac{1}{N+1}\sum_{n=0}^{N}\alpha(x_n).
$$

To provide an unbiased estimate $H(Z,\xi)$ we can follow the approach in \cite{ub_filt}.
Let $N_l=2^{l}$,  $l\in\mathbb{N}$,  $N_0=0$ and $\mathbb{P}_{\texttt{L}}(l)$ be a positive probability on
$\mathbb{N}$.  We have the following method.
\begin{enumerate}
\item{Sample $L\sim\mathbb{P}_{\texttt{L}}(\cdot)$.}
\item{If $L=1$ then run \textbf{MC} with $N_1$ samples
and return
$$
\widehat{H}(Z,\xi) = \frac{1}{\mathbb{P}_{\texttt{L}}(L)}
\nu_z^{N_1}(\nabla g(Z,\cdot,\xi))^{\top}\nabla f\left(Z,\nu_z^{N_1}(g(Z,\cdot,\xi))\right)
$$
Otherwise go to 3..}
\item{For $l\in\{1,\dots,L\}$ independently run \textbf{MC} with $N_l-N_{l-1}$ samples.
Set for any $l\in\mathbb{N}$ and any function $\alpha:\mathsf{X}\rightarrow\mathbb{R}^d$ 
$$
\nu_z^{N_{1:l}}(\alpha) := \sum_{p=1}^l \left(\frac{N_p-N_{p-1}+1}{N_l}\right)\nu_z^{N_p-N_{p-1}}(\alpha).
$$
Then return
$$
\widehat{H}(Z,\xi) = 
$$
$$
\frac{1}{\mathbb{P}_{\texttt{L}}(L)}\left\{
\nu_z^{N_{1:L}}(\nabla g(Z,\cdot,\xi))^{\top}\nabla f\left(Z,\nu_z^{N_{1:L}}(g(Z,\cdot,\xi))\right) - 
\nu_z^{N_{1:L-1}}(\nabla g(Z,\cdot,\xi))^{\top}\nabla f\left(Z,\nu_z^{N_{1:L-1}}(g(Z,\cdot,\xi))\right)
\right\}.
$$
}
\end{enumerate}

\subsection{Theory}

We will establish that $\widehat{H}(Z,\xi)$ is indeed an unbiased estimator.  We will make the following assumption.
Note that for any vector $x\in\mathbb{R}^d$ then we write $x^{(i)}$ as its $i^{\text{th}}$ component,  $i\in\{1,\dots,d\}$,  similarly for any matrix $x$ that is $d_1\times d_2$ we write $x^{(i,j)}$ as its $(i,j)^{\text{th}}$ entry  $(i,j)\in\{1,\dots,d_1\}\times\{1,\dots,d_2\}$.

\begin{hypA}\label{ass:1}
\begin{enumerate}
\item{There exists $\varepsilon\in(0,1)$ and a probability measure $\kappa$ on $(\mathsf{X},\mathscr{X})$ such that for
any $(z,x)\in\mathsf{Z}\times\mathsf{X}$
$$
M_z(x,dx') \geq \varepsilon\kappa(dx').
$$
}
\item{We have that for $(i,j)\in\{1,\dots,k\}^2$:
\begin{align*}
\sup_{(z,x,\xi)\in\mathsf{Z}\times\mathsf{X}\times\Xi}|g(z,x,\xi)^{(i)}| & <+\infty\\
\sup_{(z,u)\in\mathsf{Z}\times\mathbb{R}^k}|\nabla f(z,u)^{(i)}| & <+\infty \\
\sup_{(z,x,\xi)\in\mathsf{Z}\times\mathsf{X}\times\Xi}|\nabla g(z,x,\xi)^{(i,j)}| & <+\infty.
\end{align*}
}
\item{There exists a $C<\infty$ such that for any $(i,z,z',u,u'))\in\{1,\dots,k\}\times\mathsf{Z}^2\times\mathbb{R}^{2k}$:
$$
|\nabla f(z,u)^{(i)}-\nabla f(z,u')^{(i)}|  < C\left(\|z-z'\|+\|u-u'\|\right)
$$
where $\|\cdot\|$ is the $L_2-$norm.}
\end{enumerate}
\end{hypA}

The proof of the following result can be found in Appendix \ref{app:prf_prop}. Below $\mathbb{E}$ denotes expectation w.r.t.~the law used to generate $\widehat{H}(Z,\xi)$.

\begin{prop}\label{prop:1}
Assume (A\ref{ass:1}) and for any $\beta\in(0,1)$ set $\mathbb{P}_{\texttt{L}}(l)\propto 2^{-\beta l}$.  Then
for any $(i,z,\xi)\in \{1,\dots,k\}\times\mathsf{Z}\times\Xi$ we have that 
\begin{align*}
\mathbb{E}[\widehat{H}(z,\xi)] & = H(z,\xi) \\
\mathbb{E}[\left(\widehat{H}(Z,\xi)^{(i)}\right)^2] & < +\infty.
\end{align*}
\end{prop}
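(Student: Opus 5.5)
This is the standard single-term randomized debiasing argument, as in \cite{ub_filt}. Fix $(z,\xi)$ and write $\Phi(\mu) := \mu(\nabla g(z,\cdot,\xi))^{\top}\nabla f\left(z,\mu(g(z,\cdot,\xi))\right)$ for a probability measure $\mu$ on $\mathsf{X}$. Set $\Delta_1 := \Phi(\nu_z^{N_{1:1}})$ and, for $l\geq 2$, $\Delta_l := \Phi(\nu_z^{N_{1:l}})-\Phi(\nu_z^{N_{1:l-1}})$, all built on one infinite collection of independent \textbf{MC} runs. Then $\widehat{H}(z,\xi)=\Delta_L/\mathbb{P}_{\texttt{L}}(L)$, and the sum $\sum_{l=1}^{l'}\Delta_l=\Phi(\nu_z^{N_{1:l'}})$ telescopes.

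The plan has three steps.

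\textbf{Step 1: convergence of the telescoped sum.} I show $\mathbb{E}[\Phi(\nu_z^{N_{1:l}})]\to H(z,\xi)$ as $l\to\infty$. The proof of this step will give the key $L_2$ bound
$$
\mathbb{E}\big[\|\nu_z^{N_{1:l}}(\alpha)-\nu(\alpha)\|^2\big]\leq C/N_l
$$
for any bounded $\alpha$; here $\nu$ is the law $\nu(x|z)dx$, and $\alpha$ is taken to be $g$ or $\nabla g$, both bounded by (A\ref{ass:1}) 2. This bound follows from the Doeblin condition (A\ref{ass:1}) 1. The minorization gives uniform geometric ergodicity, $\|M_z^n(x,\cdot)-\nu\|_{tv}\leq(1-\varepsilon)^n$, uniformly in $z$ and $x$. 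The standard Poisson-equation or covariance-summation argument then gives, for each \textbf{MC} run of length $n$, bias $O(1/n)$ and mean-square error $O(1/n)$. The runs in $\nu_z^{N_{1:l}}$ are independent with weights summing to about one, the total sample size is of order $N_l$, and the bias terms are bounded by $\sum_p 1/N_l=O(l/N_l)$. Together these give the bound up to a harmless $l^2/N_l^2$ term, which I absorb by replacing $C/N_l$ with $Cl/N_l$ if needed. Since $\nabla f$ is bounded and Lipschitz in $u$ by (A\ref{ass:1}) 2--3 and $\nabla g$ is bounded, I decompose
$$
\Phi(\mu)-\Phi(\nu)=(\mu-\nu)(\nabla g)^{\top}\nabla f(z,\mu(g))+\nu(\nabla g)^{\top}\big[\nabla f(z,\mu(g))-\nabla f(z,\nu(g))\big].
$$
This yields $\mathbb{E}[\|\Phi(\nu_z^{N_{1:l}})-H(z,\xi)\|^2]\leq C l/N_l$.

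\textbf{Step 2: variance of the increments.} From Step 1 and $(a-b)^2\leq 2a^2+2b^2$ applied around $H$,
$$
\mathbb{E}[(\Delta_l^{(i)})^2]\leq C\,l\,2^{-l}.
$$

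\textbf{Step 3: unbiasedness and finite second moment.} Conditioning on $L$, independent of the runs,
$$
\mathbb{E}[\widehat{H}]=\sum_l\mathbb{E}[\Delta_l],
\qquad
\mathbb{E}[(\widehat{H}^{(i)})^2]=\sum_l \frac{\mathbb{E}[(\Delta_l^{(i)})^2]}{\mathbb{P}_{\texttt{L}}(l)}\leq C\sum_l l\,2^{-l}2^{\beta l}<\infty,
$$
since $\beta<1$. This gives the second claim. The sum $\sum_l\mathbb{E}[\|\Delta_l\|]\leq\sum_l (Cl2^{-l})^{1/2}$ is finite, so the series for $\mathbb{E}[\widehat{H}]$ converges absolutely. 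Its partial sums equal $\mathbb{E}[\Phi(\nu_z^{N_{1:l}})]\to H(z,\xi)$ by Step 1, which gives the first claim. If the claim is wanted with $Z$ random, as literally written, the bounds are uniform in $z$, so integrating over $Z$ changes nothing.

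\textbf{Main obstacle.} The hard part is the uniform-in-$z$ $L_2$ rate for the concatenated estimator $\nu_z^{N_{1:l}}$. The weights $(N_p-N_{p-1}+1)/N_l$ do not exactly sum to one, and each independent run's initialisation bias from $\eta$ accumulates across runs. I would first try to bound the bias of each run by $C/(N_p-N_{p-1}+1)$ via geometric ergodicity, then show the aggregate bias is $O(l/N_l)$. If necessary, I would renormalise the weights in the argument, which costs only another $O(l/N_l)$ error.
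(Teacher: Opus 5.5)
Your proposal is correct and follows the paper's proof. The shared steps are: an $O(2^{-l})$ mean-square rate for $\nu_z^{N_{1:l}}$ from the Doeblin condition via the Poisson equation; centring both levels of $\Delta_l$ at $H(z,\xi)$; the same split into a $\nabla g$-error term and a Lipschitz $\nabla f$ term (the paper's $T_1,\dots,T_4$); and summation against $2^{\beta l}$. The paper combines per-run $L_r$ bounds by Minkowski where you use independence of the runs plus a separate bias bound, and your Step 3 spells out the telescoping and absolute-summability argument that the paper only cites.

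One caveat on the weight issue you flagged: exact unbiasedness needs the $L=1$ output $\Phi(\nu_z^{N_1})$ to coincide with your $\Delta_1=\Phi(\nu_z^{N_{1:1}})$. With the weights as written, $\nu_z^{N_{1:1}}=\tfrac{3}{2}\nu_z^{N_1}$, so the weights must be normalised in the definition of the estimator itself, not only inside your rate argument. The paper's proof passes over this slip too.
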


Proposition \ref{prop:1} establishes the unbiased and finite variance property of the estimator of $H(Z,\xi)$.  Just as in \cite{ub_filt} the expected cost is infinite to obtain the estimator,  but none-the-less,  with high probability we can find the estimator with finite computational cost.  We refer the reader to \cite{ub_filt} for the details.

\section{Numerical Examples}\label{sec:numerics}

In this section, we present two numerical applications of the proposed unbiased methodology. 
The first example considers parameter estimation in a state-space model with a mixture observation likelihood, combining Student's $t$ distributions and a Gaussian component. 
The second example focuses on a portfolio selection problem based on multivariate stochastic volatility (MSV) models, following the framework of \cite{mv_sv}. 
All algorithms are implemented in MATLAB, and the corresponding code is available at \url{https://github.com/maabs/Unbiased-CSO}.

\subsection{Model Averaged Parameter Estimation for State-Space Models}

\subsubsection{Modeling}

Consider the following state-space model for $X_0=x_0\in\mathbb{R}$ given and $n\in\mathbb{N}$:
\begin{align*}
X_n  & = T_{n,\theta}(X_{n-1},\epsilon_n) \\
M & \stackrel{\text{ind}}{\sim} \mathcal{U}_{\{1,\dots,m,m^{\star}\}}\\
Y_n|X_n,M & \stackrel{\text{ind}}{\sim} \
\left\{\begin{array}{ll}
\mathcal{T}_M(X_n,\sigma^2) & \text{if}~M\in\{1, \dots,m\}\\
\mathcal{N}(X_n,\sigma^2) & \text{if}~M=m^{\star}
\end{array}\right.
\end{align*}
where 
$\epsilon_n$ is an i.i.d.~real sequence (independent of all other random variables in the model) with a positive Lebesgue density,  $T_{n,\theta}:\mathbb{R}^2\rightarrow\mathbb{R}$ is such that the conditional Lebesgue density of $X_n$ on $X_{n-1}$, $t_{n,\theta}(x_n|x_{n-1})$ and its gradient $\nabla t_{n,\theta}(x_n|x_{n-1})$ w.r.t. the static parameters are available, $\nabla$ denotes the gradient w.r.t. the parameters $\xi$,  
$\stackrel{\text{ind}}{\sim}$ means a random variable is independently distributed as, 
$\mathcal{U}_{\{1,\dots,m,m^{\star}\}}$ is the discrete uniform on $\mathsf{M}:=\{1,\dots,m,m^{\star}\}$,  $\mathcal{T}_M(X_n,\sigma^2)$ is the Student-$t$ distribution with $M$ degrees of freedom, location $X_n$ and scale $\sigma$, and $\mathcal{N}(X_n,\sigma^2)$ is the normal distribution of mean $X_n$ and variance $\sigma^2$.

In this problem, denoting the conditional density of $y_n|x_n,m$  as $g_{\sigma}(y_n|x_n,m) $,  we seek to estimate $\xi=(\sigma,\theta)$, $\theta\in\Theta$,  $\Xi=\mathbb{R}^+\times\Theta$ so as to maximize the marginal likelihood for $\mathscr{T}\in\mathbb{N}$ fixed,  
$$
\sup_{\xi\in\Xi}\sum_{m\in\mathsf{M}} \int_{\mathbb{R}^{\mathscr{T}}} \left\{
\prod_{n=1}^{\mathscr{T}}g_{\sigma}(y_n|x_n,m) t_{n,\theta}(x_n|x_{n-1})  \right\}dx_{1:\mathscr{T}} p(m)
$$
where $p(m)$ is the prior on $m$.  The problem is equivalent to
$$
\sup_{\xi\in\Xi}  \log\left(
\sum_{m\in\mathsf{M}} \int_{\mathbb{R}^{\mathscr{T}}} \left\{
\prod_{n=1}^\mathscr{T}g_{\sigma}(y_n|x_n,m) t_{n,\theta}(x_n|x_{n-1})\right\} dx_{1:\mathscr{T}} p(m)
\right)
$$
with gradient:
$$
 \sum_{m\in\mathsf{M}} \int_{\mathbb{R}^{\mathscr{T}}} 
\left\{
\sum_{n=1}^{\mathscr{T}} \nabla \log
\left(
g_{\sigma}(y_n|x_n,m) t_{n,\theta}(x_n|x_{n-1})
\right)
\right\}
\pi_{\theta}(x_{1:\mathscr{T}}|y_{1:\mathscr{T}},m) dx_{1:\mathscr{T}}p(m)
$$
where
$$
\pi_{\theta}(x_{1:\mathscr{T}}|y_{1:\mathscr{T}},m) \propto \prod_{n=1}^{\mathscr{T}}g_{\sigma}(y_n|x_n,m) t_{n,\theta}(x_n|x_{n-1}).
$$
We remark that averaging over $M$ and then having to perform an MCMC step for each invariant measure,  just to compute the gradient is too expensive.  So one can sample $m$ and then use our methodology to compute an unbiased estimate of the gradient.

\subsubsection{Numerical Settings and Results}
\label{subsubsec:num_set_first_example}
 The choice of the hidden model of the state-space is a Gaussian/linear model for $n\in\{1,\dots,\mathscr{T}\}$
\[
X_n = \mu X_{n-1} + {\Sigma}^{\frac{1}{2}}\,\epsilon_n, 
\]
where $\theta = (\mu,\Sigma)$ with $\mu \in (-1,1)$ and $\Sigma \in \mathbb{R}^+$, and 
$\epsilon_n\stackrel{\text{i.i.d.}}{\sim}\mathcal{N}(0,1)$, $x_0 = 0$ and $\mathscr{T} = 30$.
We generate data from the model with $\xi = (\sigma,\mu,\Sigma) = (0.3, 0.95,0.2)$.

Let $N_l = 2^{\,l}$, $l\in \mathbb{N}$, and let $q\geq 1$. We define the level distribution of the unbiased estimator as
\[
\mathbb{P}_{\mathtt{L}}(l)\propto (l+q)\,\log^{2}(l+q)\,N_l^{-1},
\]
which ensures unbiasedness and finite variance of the estimator.
In practice, truncation of the level distribution is necessary to bound the maximum computational time of the estimator; moreover, truncation prevents potential memory overload. We therefore consider the truncated distribution
\begin{align}
    \widehat{\mathbb{P}}_{\mathtt{L}}(l)\propto (l+q)\,\log^{2}(l+q)\,N_l^{-1},
\qquad l \in \{0,1,\dots,L_{\max}\},
\label{eq:trunc_prob}
\end{align}
with $L_{\max}=10$ and $q=4$. The choice of $L_{\max}$ is sufficient to ensure that the resulting estimator is effectively unbiased within the target error regime.

We use the particle Gibbs algorithm with backward sampling \cite{ljs2014}
to  sample of $\pi_\theta(x_{1:T}\mid y_{1:T},m)$.  
Particle Gibbs is known to exhibit good mixing properties when the number of particles scales as $N_{pf}=\mathcal{O}(\mathscr{T}^{\delta})$, with $\delta \geq 1$ \cite{ldm2014}. In our implementation, we fix $N_{pf}=10$, which results in a good mixing rate and a relatively low computational cost per MCMC iteration.

Consider the average of $S\in \mathbb{N}$ independent unbiased estimators,
$$
 H_S(m,\xi) =
\frac{1}{S} \sum_{s=1}^S \widehat H^{(s)}(m,\xi).
$$
Figures~\ref{fig:relMSE_gaus} and~\ref{fig:relMSEtstud} display an approximation of the relative mean squared error (MSE) of this averaged estimator \( H_S \), under different observation models. Figure~\ref{fig:relMSE_gaus} corresponds to the case where we have Gaussian observations, whereas Figure~\ref{fig:relMSEtstud} corresponds to the case where we have Student-$t$ distributed observations.
The Student-$t$ observation model converges in distribution to the Gaussian model as 
$m \to \infty$. 
In order to estimate the MSE, we compare against the analytical score of the linear-Gaussian 
state-space model.  
We use a very large degrees-of-freedom parameter, $m = 10^5$, so that the score of the 
Student-$t$ observation model is, for practical purposes, indistinguishable 
from the Gaussian score.
Our objective when constructing the truncated unbiased estimator is to choose $L_{\max}$ so that 
the squared bias is several orders of magnitude smaller than the target MSE; this is the case for both figures~\ref{fig:relMSE_gaus} and ~\ref{fig:relMSEtstud}.

\begin{figure}[htbp]
    \centering
    
    \begin{subfigure}[t]{0.45\textwidth}
        \centering
        \includegraphics[width=\linewidth]{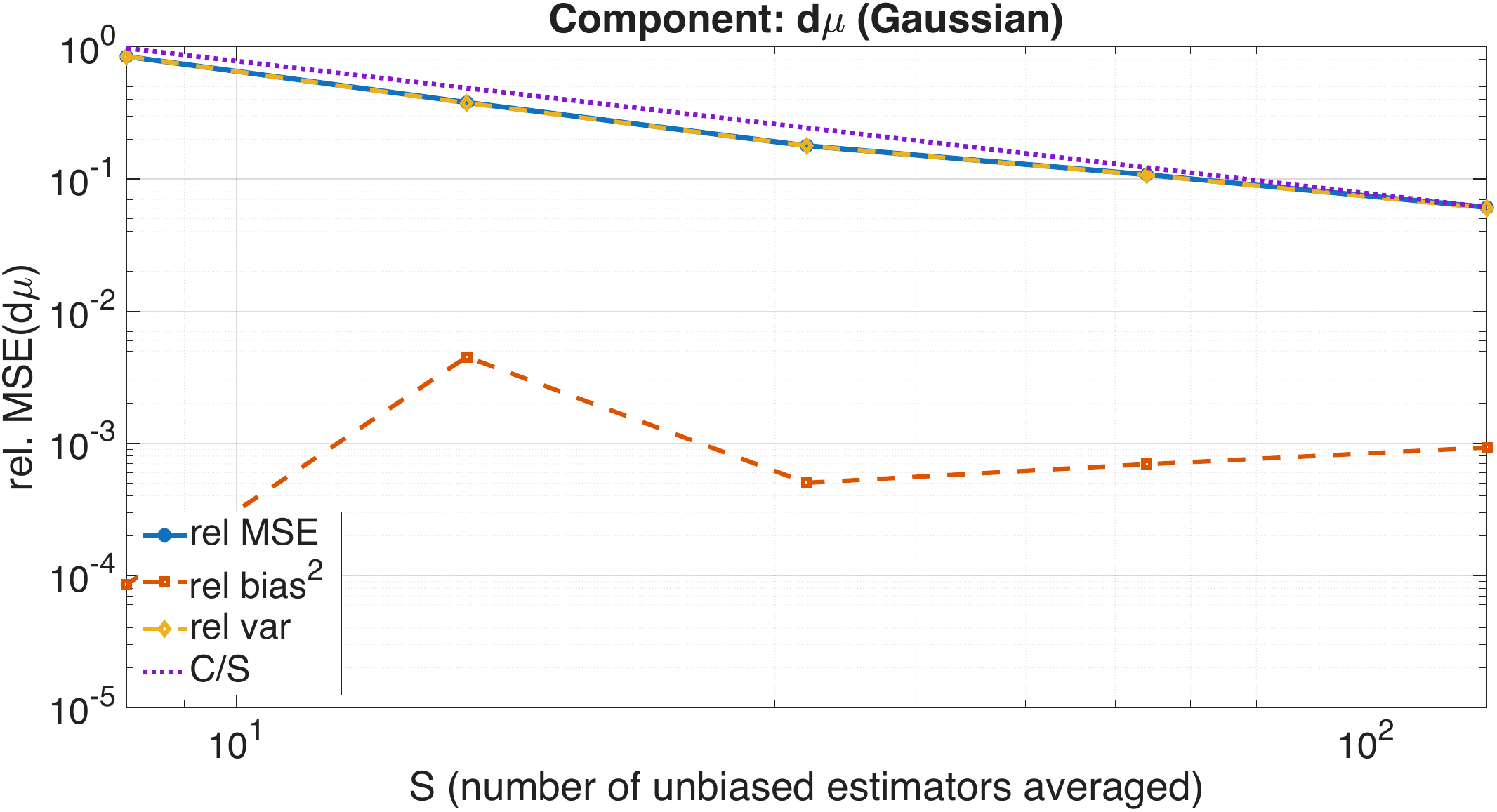}
        \label{fig:cn1}
    \end{subfigure}
    \hfill
    \begin{subfigure}[t]{0.45\textwidth}
        \centering
        \includegraphics[width=\linewidth]{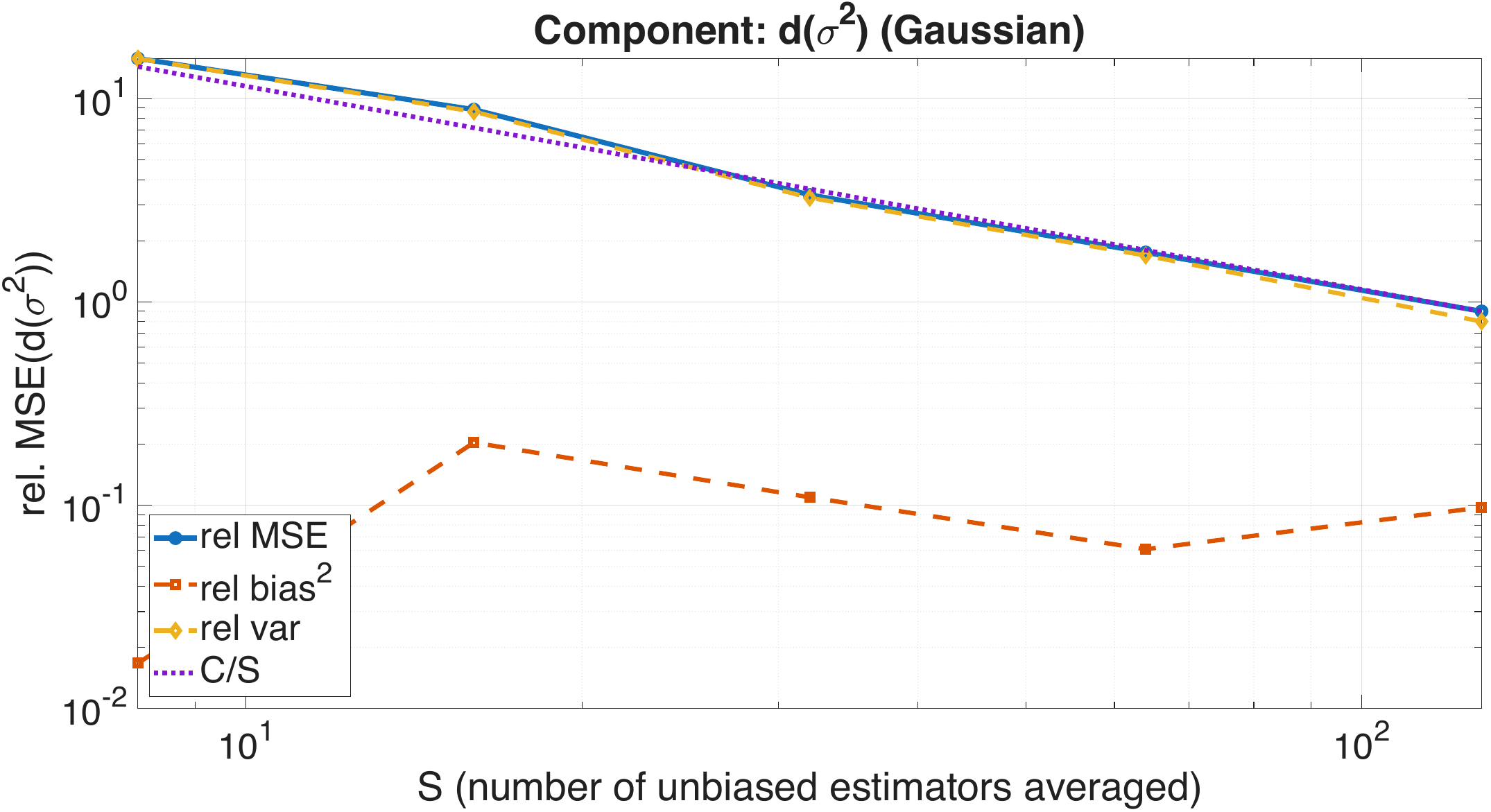}
        
        \label{fig:cn2}
    \end{subfigure}

    \vspace{0.5cm}

    \begin{subfigure}[t]{0.45\textwidth}
        \centering
        \includegraphics[width=\linewidth]{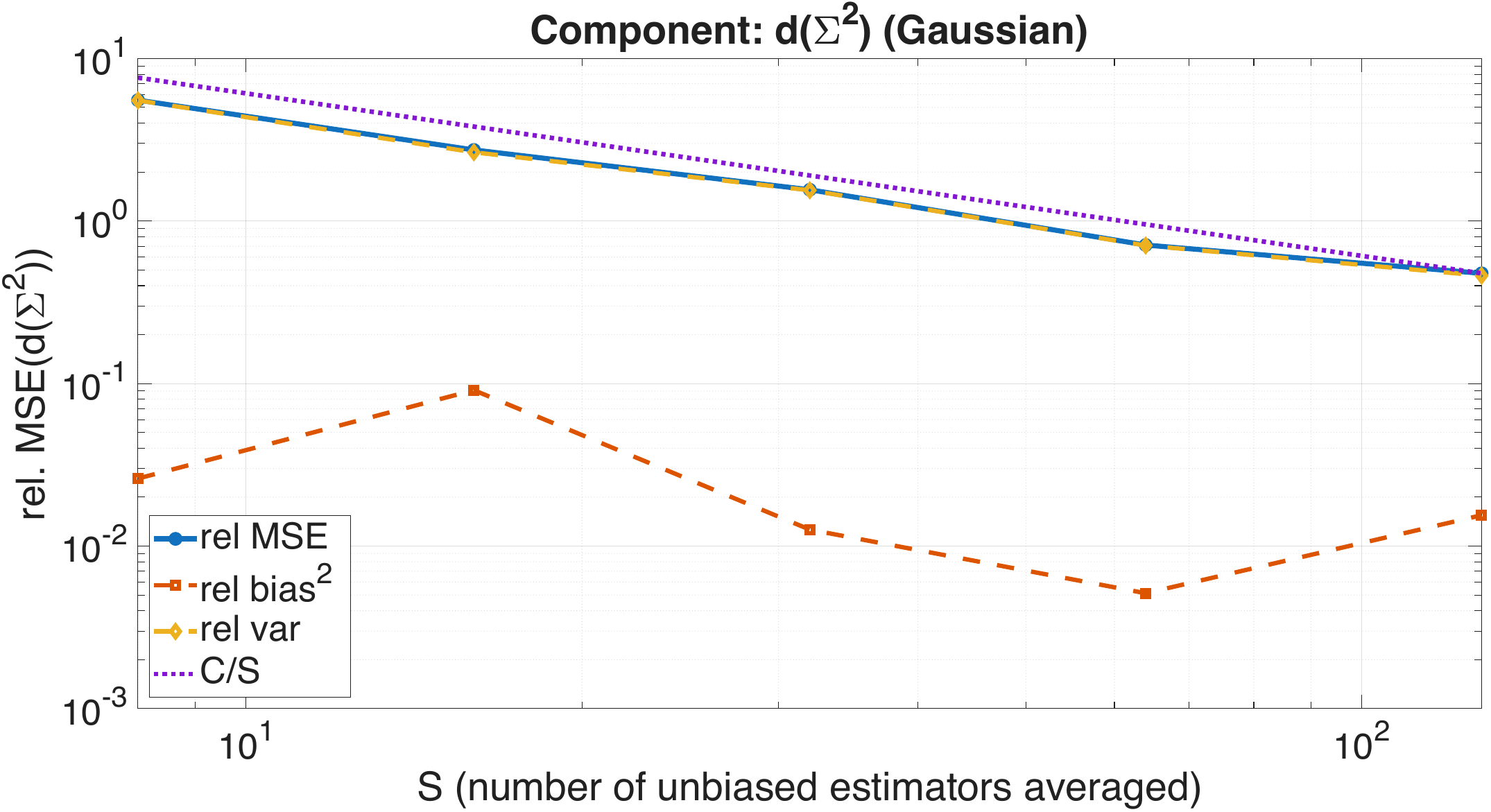}
        
        \label{fig:cn3}
    \end{subfigure}
    \hfill
    \begin{subfigure}[t]{0.45\textwidth}
        \centering
        \includegraphics[width=\linewidth]{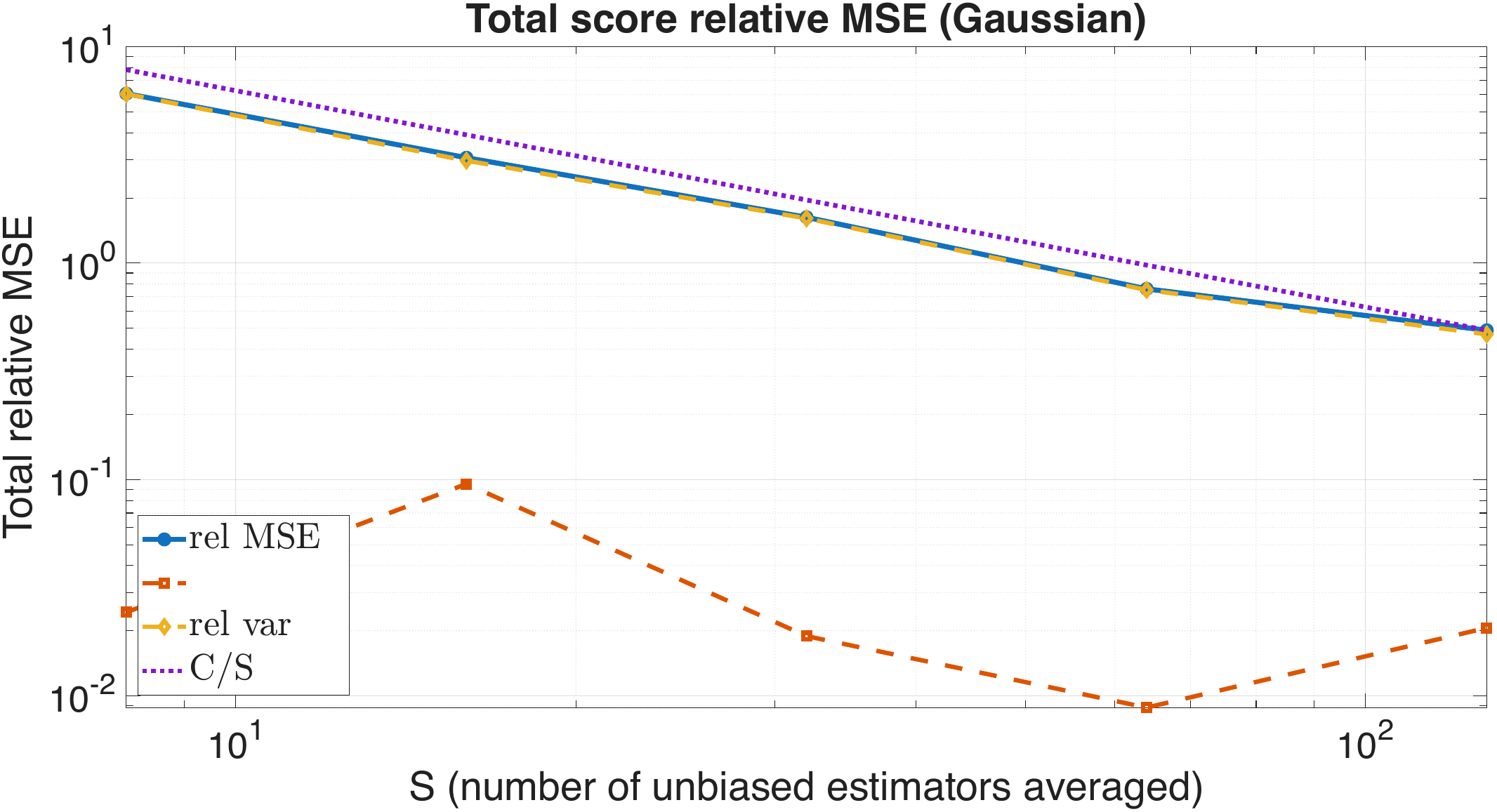}
        
        \label{fig:cn4}
    \end{subfigure}

    \caption{Estimation of the relative MSE, bias squared and variance of the estimated score function in terms of the average size $S$ for the Gaussian observations.}
    \label{fig:relMSE_gaus}
\end{figure}

\begin{figure}[htbp]
    \centering
    
    \begin{subfigure}[t]{0.45\textwidth}
        \centering
        \includegraphics[width=\linewidth]{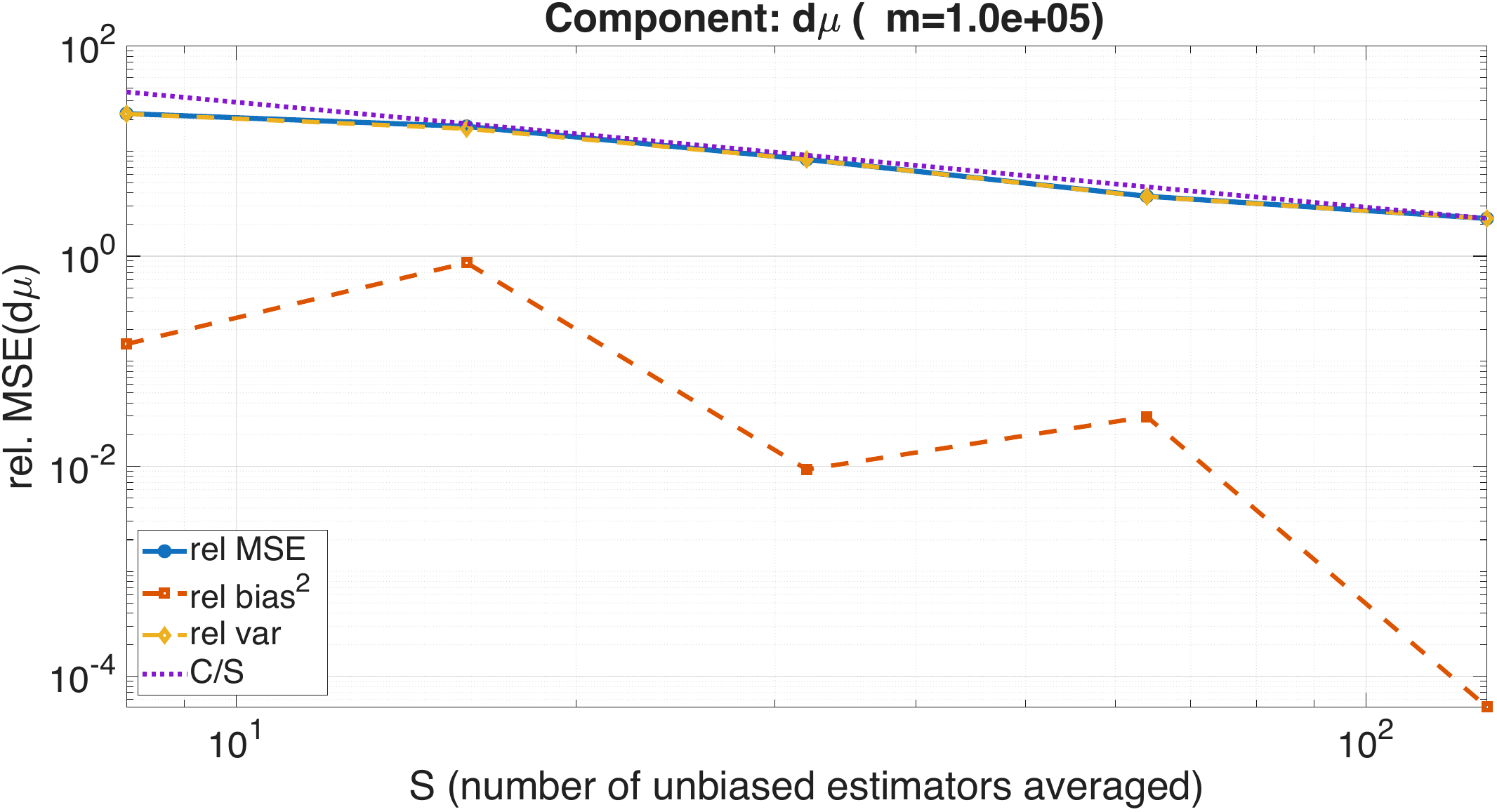}
        
        \label{fig:cn1}
    \end{subfigure}
    \hfill
    \begin{subfigure}[t]{0.45\textwidth}
        \centering
        \includegraphics[width=\linewidth]{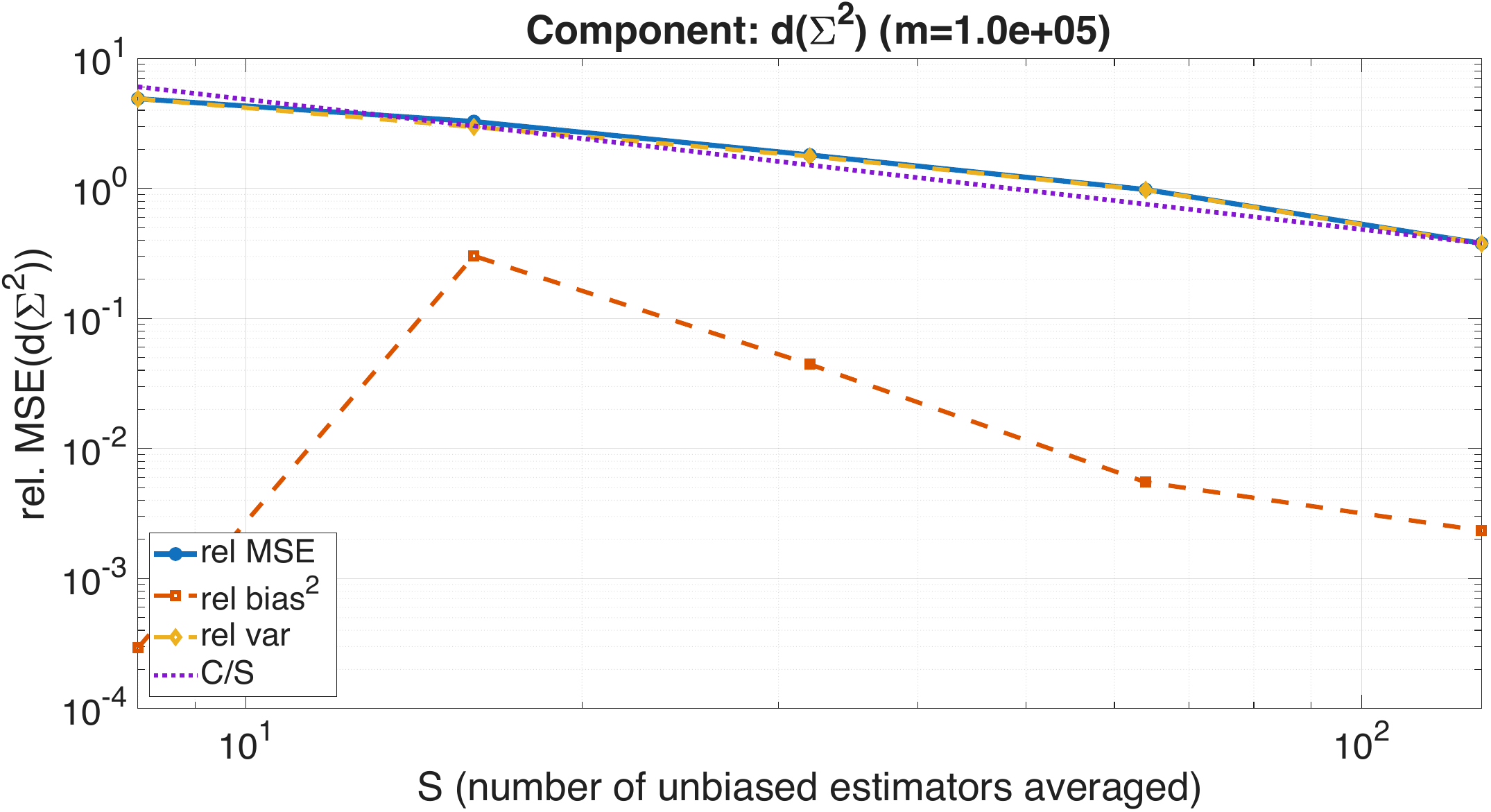}
        
        \label{fig:cn2}
    \end{subfigure}

    \vspace{0.5cm}

    \begin{subfigure}[t]{0.45\textwidth}
        \centering
        \includegraphics[width=\linewidth]{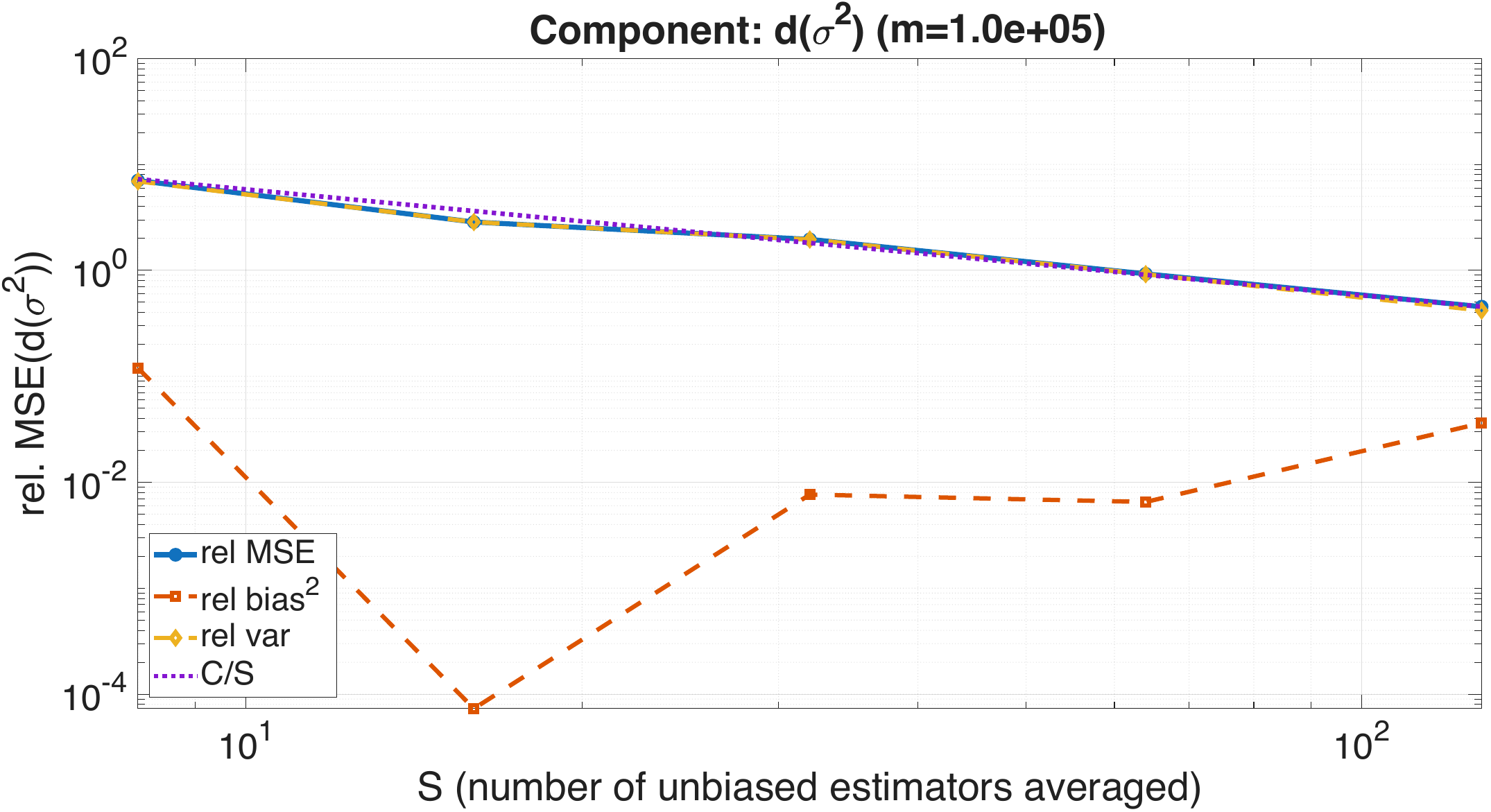}
        
        \label{fig:cn3}
    \end{subfigure}
    \hfill
    \begin{subfigure}[t]{0.45\textwidth}
        \centering
        \includegraphics[width=\linewidth]{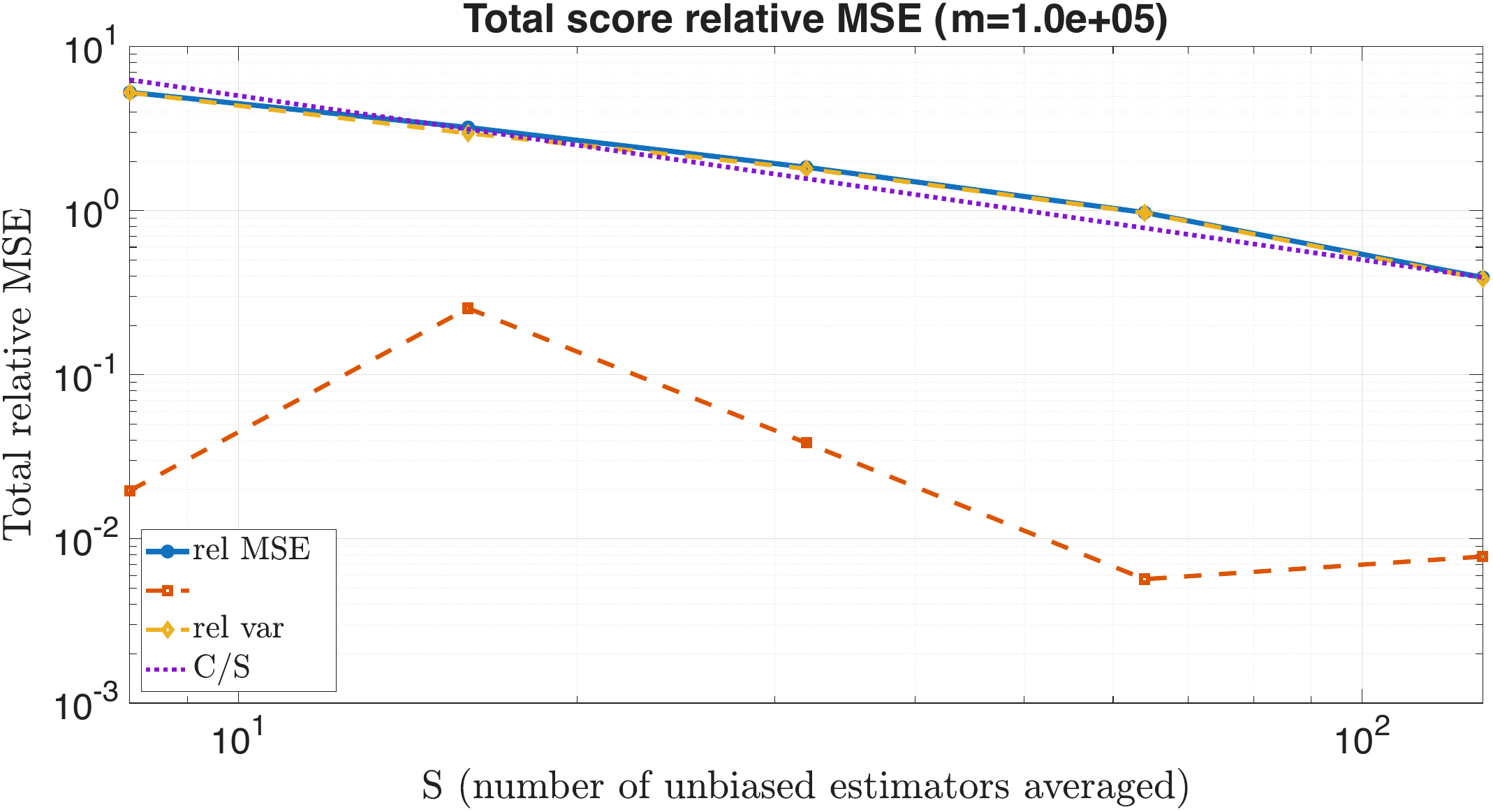}
        
        \label{fig:cn4}
    \end{subfigure}

    \caption{Estimation of the relative MSE, bias squared and variance of the estimated score function in terms of the average size $S$ for the Student-$t$ distributed observations with $m=10^5$.}
    \label{fig:relMSEtstud}
\end{figure}
Figure~\ref{fig:relMSE_mix} is analogous to Figure~\ref{fig:relMSEtstud} in that it corresponds to the case of Student-$t$ distributed observations, with the difference that here \(m = 5\). In this setting, the reference value of the score function is approximated by averaging a relatively large number of unbiased estimators. We observe that the same conclusions drawn from Figures~\ref{fig:relMSE_gaus} and~\ref{fig:relMSEtstud} continue to hold.
\begin{figure}[htbp]
    \centering
    
    \begin{subfigure}[t]{0.45\textwidth}
        \centering
        \includegraphics[width=\linewidth]{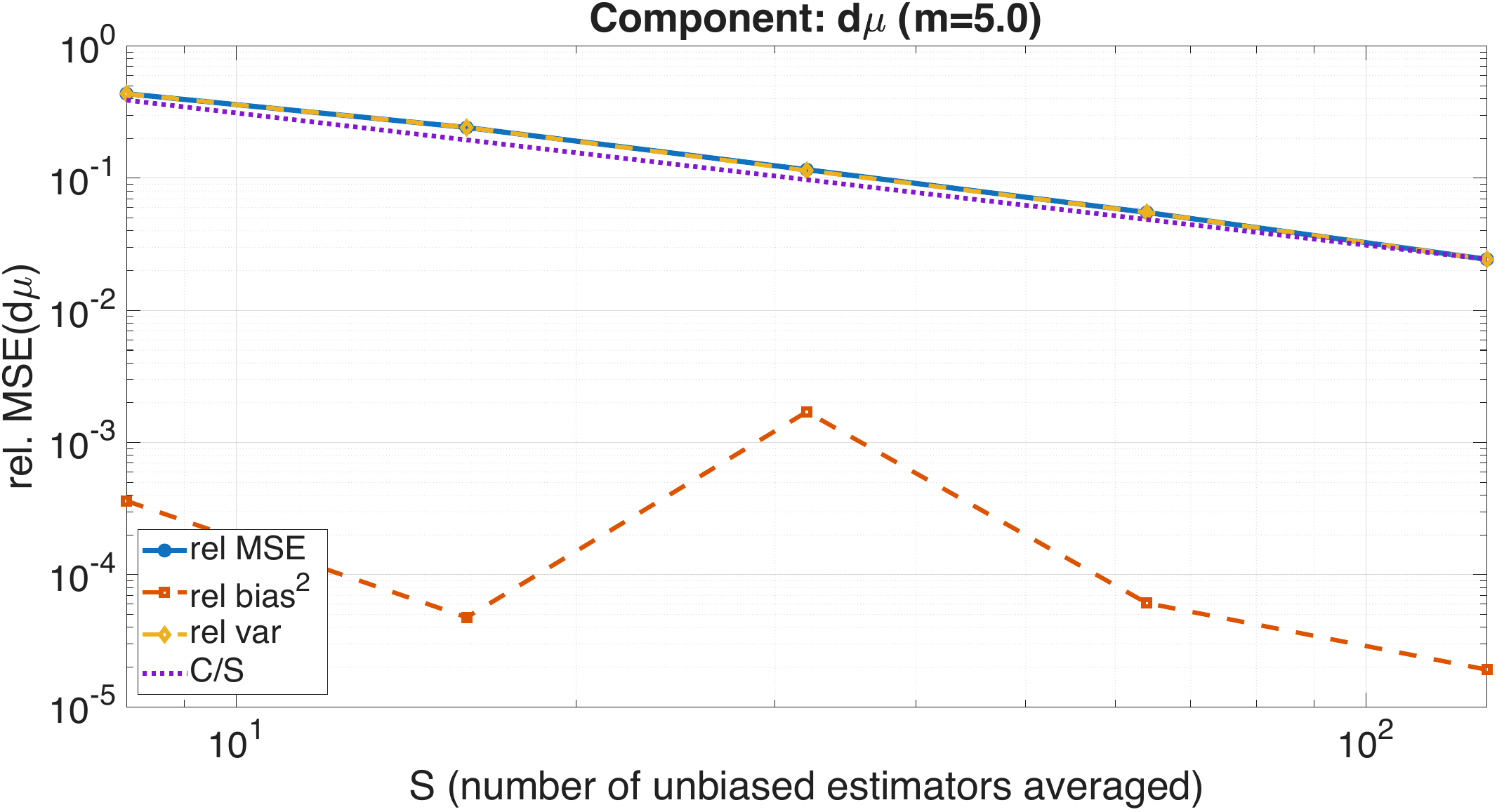}
        
        \label{fig:cnm1}
    \end{subfigure}
    \hfill
    \begin{subfigure}[t]{0.45\textwidth}
        \centering
        \includegraphics[width=\linewidth]{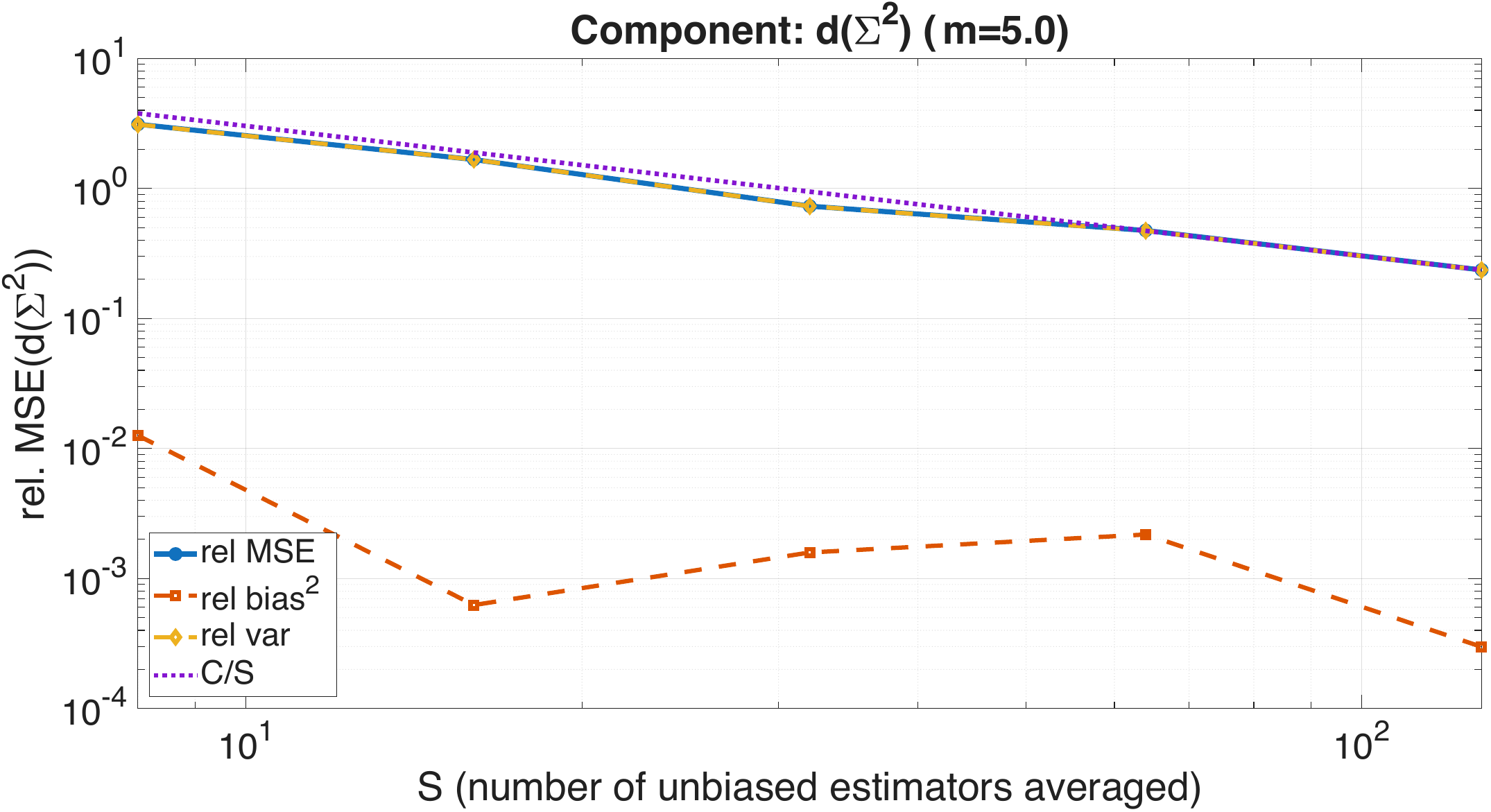}
        
        \label{fig:cnm2}
    \end{subfigure}

    \vspace{0.5cm}

    \begin{subfigure}[t]{0.45\textwidth}
        \centering
        \includegraphics[width=\linewidth]{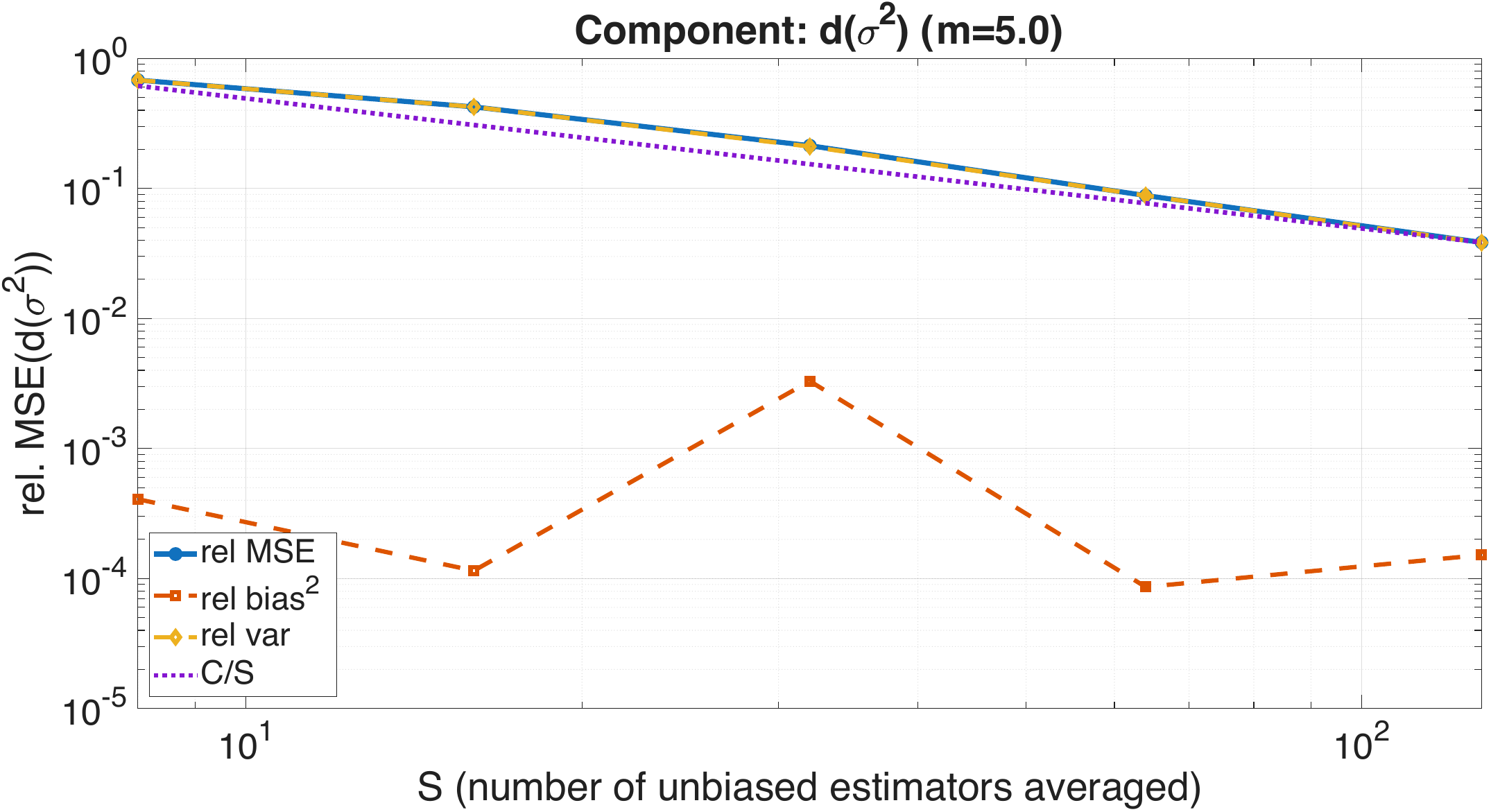}
        
        \label{fig:cnm3}
    \end{subfigure}
    \hfill
    \begin{subfigure}[t]{0.45\textwidth}
        \centering
        \includegraphics[width=\linewidth]{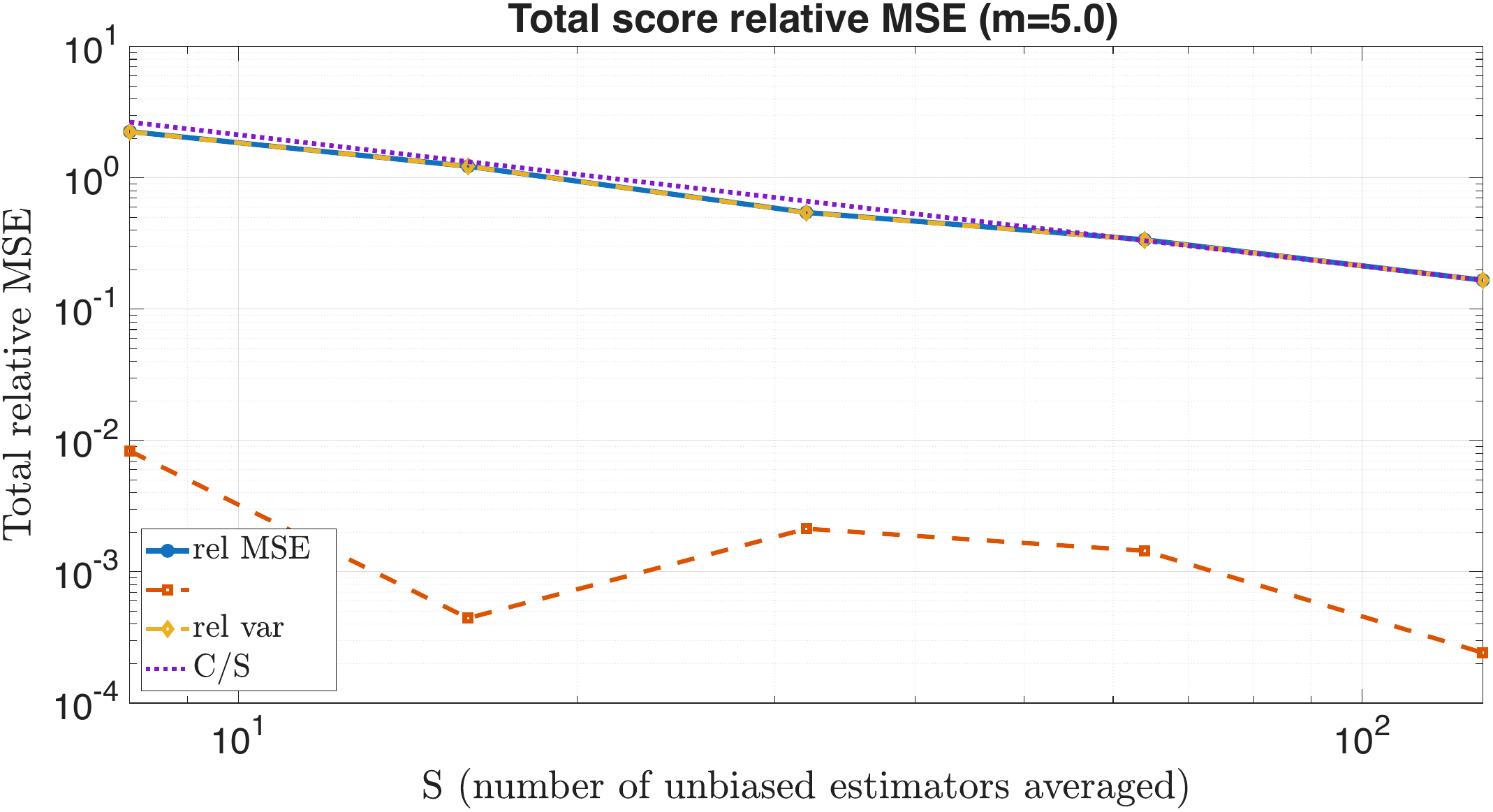}
        
        \label{fig:cnm4}
    \end{subfigure}

    \caption{Estimation of the relative MSE, bias squared and variance of the estimated score function in terms of the average size $S$ for the Student-$t$ distributed observations with $m=5$.}
    \label{fig:relMSE_mix}
\end{figure}

In the following, we consider the stochastic approximation method and display the numerical results in terms of the inferred static parameters. The data are simulated under the parameters of the model are $(\sigma,\mu,\Sigma) = (0.2, 0.7,0.25)$, $m=5$.  We use the particle Gibbs method with $N_{pf}=10$ and two iterations per update of the MSA method. $L_{\text{max}}=6$ with a number of $S=5$ unbiased estimators per step. 
The initial guesses are  $(\sigma_0,\mu_0,\Sigma_0)=(\sqrt{0.3},0.5,\sqrt{0.2})$. We use a logarithmic reparameterization $\tilde{\xi}=(\log({\sigma)},{\mu},\log({\Sigma}))$ of some of the variables of  $\xi$ in order to restrict the parametric space to positive standard deviations.   The step-size sequence is defined as
$\gamma_n = \gamma_0 (100 + n)^{-0.6}$ where $\gamma_0$ is 0.1 for the $\sigma$ update and 2 for the $(\mu,\Sigma)$ updates.

Figure \ref{fig:SA} shows how the MSE of the iterated parameters decreases in terms of the MSA iterations, demostrating the feasibility of the unbiased method. The MSE is estimated with a number \(C = 20\) independent stochastic approximation runs, additionally, the true solution is approximated using a biased MSA approximation with a significantly larger number of iterations $K\gg 10^3$ compared to the values displayed in Figure \ref{fig:SA}.

\begin{figure}[htbp]
    \centering
    \includegraphics[width=0.7\textwidth]{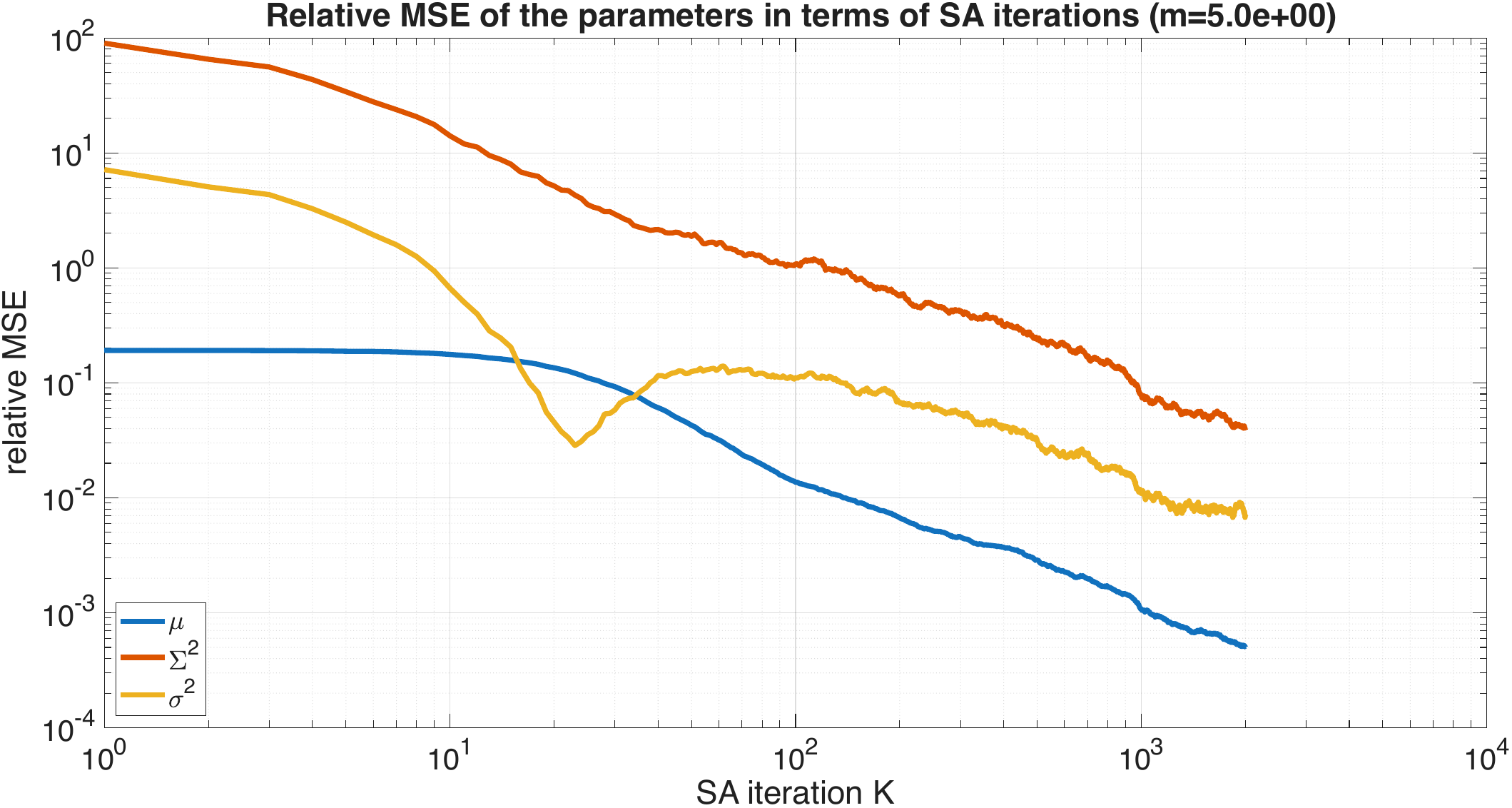}
    \caption{Estimated relative MSE of the component in terms of MSA iteration.}
    \label{fig:SA}
\end{figure}

\subsection{Portfolio selection}

\subsubsection{Framework}
\label{subsec:Framework}
We consider a model on $p$-dimensional stock returns observed from time $1$ to $\mathscr{T}\in\mathbb{N}$ associated with a latent process and static parameters. Denote the observed returns by $r_{1:\mathscr{T}}$, the latent process by $u_{1:\mathscr{T}}$, and the static parameters by $\theta$. We consider a full-factor Multivariate Stochastic Volatility (MSV) model:
\begin{equation}\label{eq:post_msv}
\pi(u_{1:\mathscr{T}},\theta \mid r_{1:\mathscr{T}}) \propto p(r_{1:\mathscr{T}} \mid u_{1:\mathscr{T}},\theta)\,p(u_{1:\mathscr{T}}\mid\theta)\,\overline{\pi}(\theta)
\end{equation}


where $p(r_{1:\mathscr{T}} \mid u_{1:\mathscr{T}},\theta)$ is the density of the returns given the latent process and the parameter,  $p(u_{1:\mathscr{T}}\mid\theta)$ the density of the latent process given the parameter and
$\overline{\pi}$ is the prior on $\theta$.
An example of such a model, which we use here, is given in \cite{mv_sv}.

Now, for $\mathscr{T}'$ \emph{future} log-returns, we would like to find an optimal portfolio. 
In the framework of Section \ref{sec:approach},  $Z=(\theta,U_{1:\mathscr{T}})$ and $X=
(R_{\mathscr{T}+1:\mathscr{T}+\mathscr{T}'},U_{\mathscr{T}+1:\mathscr{T}+\mathscr{T}'})$.
Let
$$
\omega(\xi) := \left(\frac{e^{\xi_1}}{\sum_{j=1}^p e^{\xi_j}},\dots,\frac{e^{\xi_p}}{\sum_{j=1}^p e^{\xi_j}}\right)^{\top}.
$$
We would like to find the portfolio $\omega(\xi)$, consisting of long-only positions, that maximizes
$$
\max_{\xi\in\mathbb{R}^p}\mathbb{E}[f(Z,\omega(\xi))]
$$
where the expectation is with respect to the posterior \eqref{eq:post_msv}, 
$$
f(z,\omega(\xi)) = \mathbb{E}[g(\theta,
U_{1:\mathscr{T}},R_{\mathscr{T}+1:\mathscr{T}+\mathscr{T}'},U_{\mathscr{T}+1:\mathscr{T}+\mathscr{T}'},\omega(\xi))\mid\theta,u_{1:\mathscr{T}}]
$$ 
$g$ is real-valued
and the conditional expectation is with respect to the predictive model; this is discussed below.
The function
$$
g(\theta,
U_{1:\mathscr{T}},R_{\mathscr{T}+1:\mathscr{T}+\mathscr{T}'},U_{\mathscr{T}+1:\mathscr{T}+\mathscr{T}'},\omega(\xi))
$$
$$
= \frac{1}{\mathscr{T}'}\sum_{t=\mathscr{T}+1}^{\mathscr{T}+\mathscr{T}'}
\left\{\omega(\xi)^{\top}R_t-\frac{\zeta}{2}\omega(\xi)^{\top}\left(R_{t}-\mathbb{E}[R_{t}\mid U_{1:\mathscr{T}},\theta]\right)\left(R_{t}-
\mathbb{E}[R_{t}\mid U_{1:\mathscr{T}},\theta]\right)^{\top}\omega(\xi)\right\}
$$
arises from portfolio wealth optimization under variance constraints using Lagrange multipliers, where $\zeta\in \mathbb{R}^+$ parameterizes the relation between volatility and returns of the solution.  Other approaches can be found, for example, in \cite{ps}. 
Note that $\mathbb{E}[R_{t}\mid U_{1:\mathscr{T}},\theta]$ is the expectation under the predictive model given
$Z$.  That is,  in the model to be considered we will have a structure that for any $n\in\mathbb{N}$
$$
p(r_{1:n},u_{1:n}|\theta) = \prod_{k=1}^n p(r_k|u_k,\theta) p(u_k|u_{k-1},\theta),
$$
so the conditional expectation of $r_t$ for any $t\geq\mathscr{T}+1$ is of the form
$$
\mathbb{E}[R_{t}\mid U_{1:\mathscr{T}},\theta] := \int r_t \left\{\prod_{k=\mathscr{T}+1}^t p(r_k|u_k,\theta) p(u_k|u_{k-1},\theta)
\right\}
du_{\mathscr{T}+1:t}dr_{\mathscr{T}+1:t}
$$
and $u_{\mathscr{T}},\theta$ are given.   In addition,  we have that 
\begin{align*}
\mathbb{E}[
g(Z,X,\omega(\xi))|z] =&
\int g(\theta,
u_{1:\mathscr{T}},r_{\mathscr{T}+1:\mathscr{T}+\mathscr{T}'},u_{\mathscr{T}+1:\mathscr{T}+\mathscr{T}'},\omega(\xi))
\\
&\left\{\prod_{k=\mathscr{T}+1}^{\mathscr{T}+\mathscr{T}'} p(r_k|u_k,\theta) p(u_k|u_{k-1},\theta)
\right\}\pi({u}_{1:\mathscr{T}},\mid r_{1:\mathscr{T}},{\theta})
du_{\mathscr{T}+1:\mathscr{T}+\mathscr{T}'}dr_{\mathscr{T}+1:\mathscr{T}+\mathscr{T}'}.
\end{align*}

We remark that the structure of the model is such that one can compute the inner-expectation, associated to the gradient,  unbiasedly using (standard) Monte Carlo methods and the outer expectation using MCMC as in Section \ref{sec:approach}.


\subsubsection{Model}

We present the full-factor MSV model \cite{mv_sv} in more detail.  We take for $t\in\mathbb{N}$
$$
\log(R_t+1)| B, V, F_t \stackrel{\text{ind}}{\sim} \mathcal{N}_p(BF_t,V)
$$
where 
$\mathcal{N}_{p}(\mu,\Sigma)$ is the $p-$dimensional Gaussian distribution of mean $\mu$ and covariance $\Sigma$, $B$ is $p\times K$ loading matrix that is unknown, $K\in\mathbb{N}$, $K\ll p$,  $F_t\in\mathbb{R}^K$ is a latent factor process and \( V \) is a stochastic diagonal matrix of variances.  Note that there are more latent processes
as we will now describe.  We will assume that for $t\in\mathbb{N}$
$$
F_t|\Sigma_t \stackrel{\text{ind}}{\sim} \mathcal{N}_{K}(0,\Sigma_t).
$$ 
The positive definite covariance matrix $\Sigma_t$ admits a spectral decomposition of the form \( \Sigma_t = P_t \Lambda_t P_t^\top \), where \( \Lambda_t \) is a diagonal eigenvalue matrix driven by a \(K\)-dimensional latent Gaussian process \(X_t\), with $\Lambda_{t}^{(i,i)}=\exp\left(X_{i,t}\right)$. Similarly, \( P_t \) is the eigenvector matrix, parameterized by a 
$K(K-1)/2-$dimensional Gaussian process \( \Psi_t \).

Let $\Psi_{ij,t}=\log\left(\pi/2-\Omega_{ij,t}\right)-\log\left(\pi/2+\Omega_{ij,t}\right)$ be the rotation parameter, for $1\leq i< j \leq p$, with $\Omega_{ij,t}\in (-\pi/2,\pi/2)$. The eigenvector matrix is characterized by the $K\times K$ rotation matrices $G_{ij,t}$ such that $P_t = \prod_{1 \leq i < j \leq p} G_{ij}(\Omega_{ij,t})$, where the rotation matrices are identity matrices with the following elements replaced:
\begin{align*}
&(G_{ij})^{(i,i)} = \cos(\Omega_{ij,t}), \quad (G_{ij})^{(j,j)} = \cos(\Omega_{ij,t}),\\    
&(G_{ij})^{(i,j)} = \sin(\Omega_{ij,t}), \quad (G_{ij})^{(j,i)} = -\sin(\Omega_{ij,t}).
\end{align*}
The latent processes $X_{1:\mathscr{T}+\mathscr{T}'}$ and $\Psi_{1:\mathscr{T}+\mathscr{T}'}$ follow independent Gaussian autoregressive dynamics
\begin{align}
\begin{split}
X_{i,t} &= x_{i,0} + \phi_i^x \big(X_{i,t-1} - x_{i,0}\big) + \sigma_i^x \eta_{i,t}, 
 \quad \quad \eta_{i,t} \sim \mathcal{N}(0,1), \quad \quad  i=1,\dots,K, \\
\Psi_{ij,t} &= \psi_{ij,0} + \phi_{ij}^\psi \big(\Psi_{ij,t-1} - \psi_{ij,0}\big) + \sigma_{ij}^\psi \xi_{ij,t}, \quad 
 \xi_{ij,t} \sim \mathcal{N}(0,1), \quad 1 \leq i < j \leq K.
\end{split}
\label{eq:gaussian_latent}
\end{align}
These dynamics depend on the parameters $ \theta_x = \{(\phi_i^x, x_{i,0}, \sigma_i^x)\}_{i=1}^{K} $ and  $ \theta_\psi = \{(\phi_{ij}^\psi, \psi_{ij,0}, \sigma_{ij}^\psi)\}_{1\leq i<j\leq K} $. We set $\theta=(B,V,\theta_x,\theta_\psi)$
and the entire latent process is $u_t=(F_t,X_t,\Psi_t)$. 
For further details on the full-factor MSV model, see \cite{mv_sv}.

%

The joint posterior of the latent process and parameters $(u_{1:\mathscr{T}},\theta)$ given the observed log-returns
$y_{1:\mathscr{T}}$ is
$$
\pi(u_{1:\mathscr{T}},\theta|y_{1:\mathscr{T}}) \propto \left\{\prod_{n=1}^{\mathscr{T}}p(y_n|B,V,F_n)
p(u_n|u_{n-1},\theta)\right\} \overline{\pi}(\theta)
$$ 
where the prior $\overline{\pi}(\theta)$ is as in \cite{mv_sv}. The MCMC algorithm used is that in \cite{mv_sv}.

\subsubsection{Numerical Results}
\label{subsubsec:num}

We now present numerical results for the proposed methodology applied to real financial data. We consider two equity markets: the STOXX Europe 600 and the Saudi Tadawul market. For each market, we construct a universe consisting of the most liquid stocks. We consider two experimental settings. In the first setting, we use a relatively small number of assets, \(p=20\), for both markets. This setting is used to illustrate the evolution of the objective function $F(\xi)$. In the second setting, we consider a larger-scale and more realistic portfolio allocation problem, using \(p=300\) assets for the STOXX Europe 600 and \(p=231\) assets for the Tadawul market. This second experiment evaluates the methodology in an online setting with sequential updates over a horizon of \(252\) trading days.

For the first setting, we consider a time horizon of length \(\mathscr{T}=494\), a forecasting horizon of \(\mathscr{T}'=5\), latent process dimension \(K=5\), and volatility-aversion parameter \(\zeta=20\). Further discussion regarding the choice of these parameters is provided in the context of the second experiment. The stochastic approximation algorithm is initialized using uniform portfolio weights, \(\omega_i=1/p\).

In Figure~\ref{fig:obj_f}, we display an approximation of the objective function ${F}(\xi)$, denoted by $\bar{F}(\xi)$, which is obtained by reusing the same Monte Carlo samples used to estimate the gradient at each stochastic approximation step. The figure shows the approximation of the objective function increasing throughout the stochastic approximation iterations before reaching a stationary regime. This behavior suggests that the variance of the gradient estimator remains sufficiently controlled to enable stable optimization in a moderately high-dimensional portfolio allocation problem.
\begin{figure}[htbp]
    \centering

    \begin{subfigure}[t]{0.48\textwidth}
        \centering
        \includegraphics[width=\linewidth]{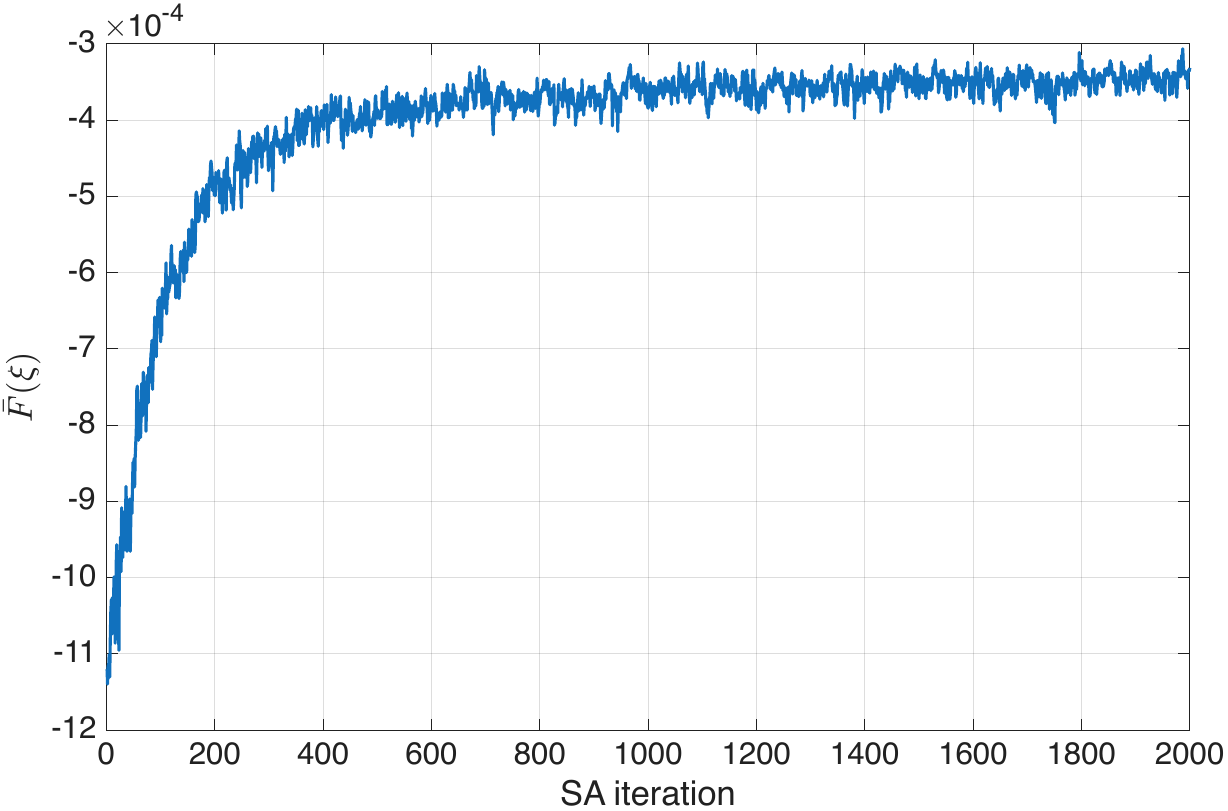}
        \caption{STOXX 600 Europe.}
        \label{fig:image1}
    \end{subfigure}
    \hfill
    \begin{subfigure}[t]{0.48\textwidth}
        \centering
        \includegraphics[width=\linewidth]{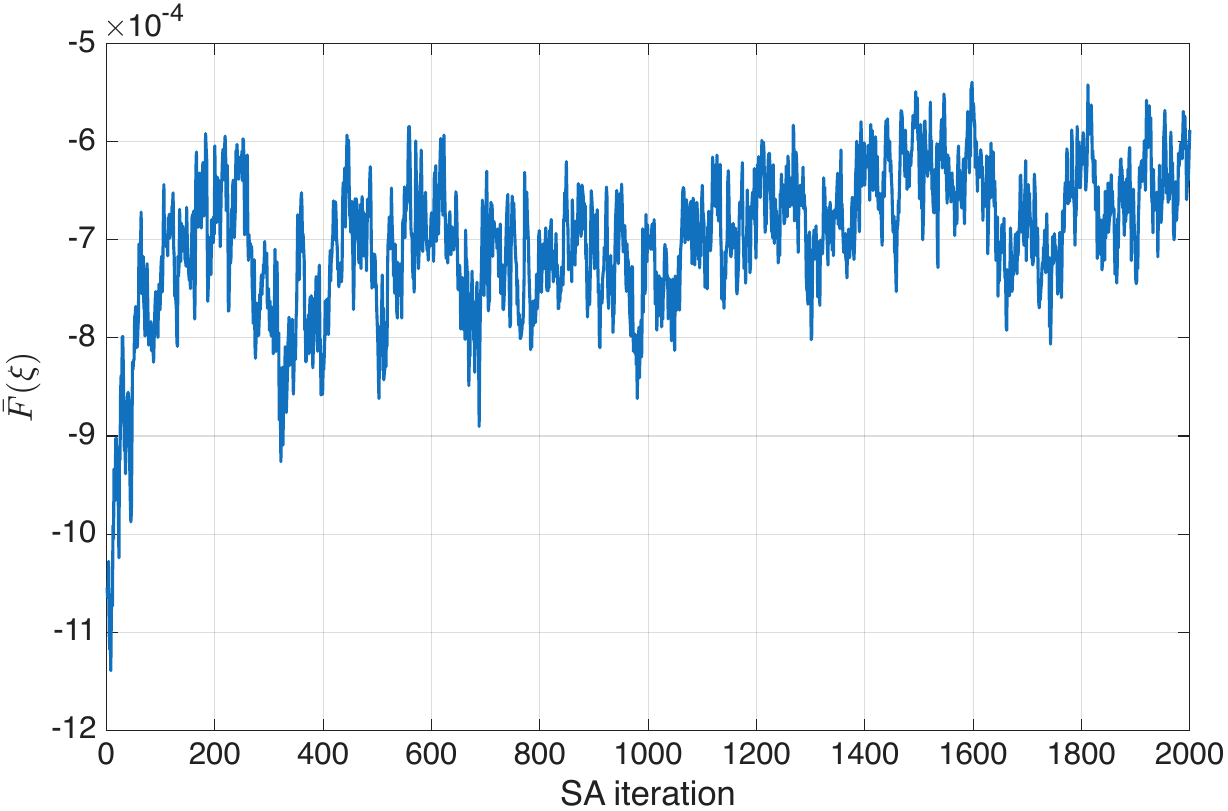}
        \caption{Tadawul.}
        \label{fig:image2}
    \end{subfigure}

    \caption{Approximation of the objective function $F(\xi)$ in terms of the number of iterations of the MSA method.}
    \label{fig:obj_f}
\end{figure}

For the second part, the algorithm is implemented in an online setting over an evaluation horizon of $252$ trading days, spanning the period from 2023-12-20 to 2024-12-13. Each dataset consists of approximately $730$ daily observations. At the initial rebalancing time, model parameters are estimated using a training window of $500$ observations. Subsequently, at each rebalancing date, the estimation window is expanded to include all available past data, and the model is updated accordingly. Portfolio rebalancing is performed every $\mathscr{T}'=1$ trading days. At each rebalancing date, portfolio weights are computed via a stochastic approximation scheme using the unbiased gradient estimator. The resulting weights are held fixed over the subsequent rebalancing interval.

The initial weights were chosen uniformly, \(\omega_i=1/p\). The latent dimension parameter was set to \(K=30\), as several backtesting experiments indicated that lower-dimensional specifications did not provide sufficient predictive power in terms of performance metrics such as the Sharpe ratio and final wealth. Similarly, we selected the volatility-aversion parameter \(\zeta=20\), since smaller values of \(\zeta\) tended to produce unstazzble portfolio allocations with excessive concentration on a small number of assets. In particular, the resulting portfolios often assigned disproportionately large weights to individual assets, making the strategy more sensitive to adverse market movements and reducing its ability to recover after losses.

The performance of the proposed unbiased MSV-MSA methodology is reported in Table~\ref{tab:performance_multi_market}. We evaluate performance using a range of standard financial metrics, including final wealth (FinalW), average percentage gain and loss, maximum drawdown (MDD), percentage of winning trades (WT), annualized return (Ann.R), annualized volatility (Ann.V), and the Sharpe ratio. As a benchmark, we consider the uniform ($1/p$) weight allocation strategy, which is widely used due to its simplicity and strong empirical performance.

\begin{table}[h!]
\centering
\small
\begin{tabular}{llcccccccc}
\toprule
Market & Strategy & FinalW & \% gain & \% loss & MDD & \% WT & Ann.R & Ann.V & Sharpe \\
\midrule

\multirow{2}{*}{\shortstack{STOXX \\Europe 600}} 
    & \shortstack{Unbiased\\ MSV-MSA} & 1.31 &    0.56 &   -0.49 &    6.62&    57.14 &    28.02 &    10.69&    2.62 \\
& Uniform     & 1.16&    0.48&  -0.49&  6.42 &    56.74 &    15.93&    9.82&    1.62\\

\midrule

\multirow{2}{*}{Tadawul} 
&\shortstack{Unbiased\\ MSV-MSA} &  1.39   &  0.54 &   -0.44 &   6.48   & 58.73   & 33.89 &   10.29 &   3.29 \\
& Uniform     &1.15  &  0.59 &   -0.63&    13.20&    56.34 &   15.48 &    13.26 &   1.16 \\

\bottomrule
\end{tabular}
\caption{Performance metrics for the unbiased MSV-MSA strategy and the uniform benchmark across the STOXX Europe 600 and Tadawul markets.}
\label{tab:performance_multi_market}
\end{table}

The results indicate consistent improvements of the unbiased MSV-MSA strategy relative to the uniform benchmark across both the STOXX Europe 600 and the Saudi Tadawul markets. In particular, the return-based metrics, including final wealth, annualized return, and Sharpe ratio, improve substantially under the proposed methodology.

Furthermore, these gains are not simply explained by increased portfolio volatility. For the STOXX Europe 600, the annualized volatility increases only moderately while the annualized return nearly doubles. In the Tadawul market, the proposed strategy simultaneously achieves higher returns, lower volatility, and significantly reduced maximum drawdown. Consequently, the Sharpe ratio improves considerably in both markets, increasing from \(1.62\) to \(2.62\) for the STOXX Europe 600 and from \(1.16\) to \(3.29\) for the Tadawul market.

Importantly, these results are obtained over an evaluation period exceeding one trading year rather than over short backtesting windows. This longer horizon helps mitigate the influence of transient market effects and provides stronger evidence of the robustness of the proposed methodology. Overall, the results demonstrate that unbiased stochastic approximation can be successfully integrated within high-dimensional MSV models to deliver meaningful improvements in risk-adjusted portfolio performance.

%
%

\subsubsection{Framework 2}

Consider a new ordering of the variables such that $\tilde{\theta}=(\theta_x,\theta_\psi)$, and $\chi=(B,V)$. The joint posterior-predictive distribution of the future returns, latent variables and parameters $({u}_{1:\mathscr{T}+\mathscr{T}'},{r}_{\mathscr{T}+1:\mathscr{T}+\mathscr{T}'},\chi,\tilde{\theta})$ given the observed log $r_{1:\mathscr{T}}$ is

\begin{align}
\begin{split}
\pi({u}_{1:\mathscr{T}+\mathscr{T}'},r_{\mathscr{T}+1:\mathscr{T}+\mathscr{T}'}, {\theta}|r_{1:\mathscr{T}})
& =\tilde{\pi}({u}_{1:\mathscr{T}+\mathscr{T}'},r_{\mathscr{T}+1:\mathscr{T}+\mathscr{T}'} ,\chi,\tilde{\theta}|r_{1:\mathscr{T}}) \\
&\propto \left\{\prod_{n=1}^{\mathscr{T}+\mathscr{T}'}p(r_n|{u}_n,\chi)p(u_n|u_{n-1},\tilde{\theta})\right\}p(\chi)\tilde{\pi}\left(\tilde{\theta}\right),
\label{eq:exp2}
\end{split}
\end{align}
where $p(u_{1}\mid u_0,\tilde{\theta})=p(u_{1}\mid \tilde{\theta})$ since $X_0$, $\Psi_0\in \tilde{\theta}$, and the prior distributions $p(\chi)\tilde{\pi}(\tilde{\theta})=\overline{\pi}(\theta)$ can be found in \cite{mv_sv}. Under this reparameterization the maximization problem is also reformulated as 
$$
\max_{\xi \in \mathbb{R}^p}\mathbb{E}\left[\tilde{f}(\widetilde{Z},\omega(\xi))\right]=\max_{\xi\in\mathbb{R}^p}\mathbb{E}[f(Z,\omega(\xi))]
$$

$$
\tilde{f}(\widetilde{z},\omega(\xi)) = \mathbb{E}[\tilde{g}(\tilde{\theta},
R_{\mathscr{T}+1:\mathscr{T}+\mathscr{T}'},U_{1:\mathscr{T}},\chi,\omega(\xi))\mid\tilde{\theta}]
$$ 
where the expectation $\mathbb{E}$ is w.r.t. the posterior \eqref{eq:exp2}. We choose $\tilde{Z}=\tilde{\theta}$ and $\tilde{X}=
(R_{\mathscr{T}+1:\mathscr{T}+\mathscr{T}'},{U}_{1:\mathscr{T}},\chi)$, these variables represent $Z$ and $X$ in section \ref{sec:approach}, we add the tilde symbol $\textasciitilde$ to differentiate them from variables established in subsection \ref{subsec:Framework}. The function $g$ is then
$$
\tilde{g}(\tilde{\theta},R_{\mathscr{T}+1:\mathscr{T}+\mathscr{T}'},{U}_{ 1:\mathscr{T}},\chi,\omega(\xi))
$$
$$
= \frac{1}{\mathscr{T}'}\sum_{t=\mathscr{T}+1}^{\mathscr{T}+\mathscr{T}'}
\left\{\omega(\xi)^{\top}R_t-\frac{\zeta}{2}\omega(\xi)^{\top}\left(R_{t}-\mathbb{E}\left[R_{t}\mid {U}_{1:\mathscr{T}},\chi,\tilde{\theta}\right]\right)\left(R_{t}-
\mathbb{E}\left[R_{t}\mid {U}_{1:\mathscr{T}},\chi,\tilde{\theta}\right]\right)^{\top}\omega(\xi)\right\}
$$
where $\mathbb{E}\left[R_{t}\mid {U}_{1:\mathscr{T}},\chi, \tilde{\theta}\right]$ is the predictive expectation conditioned on ${U}_{1:\mathscr{T}}$, $\chi$, and $\tilde{\theta}$ given by
$$
\mathbb{E}\left[R_{t}\mid {U}_{1:\mathscr{T}},\chi,\tilde{\theta}\right] := \int r_t \left\{\prod_{k=\mathscr{T}+1}^t p(r_k|{u}_k,\chi) p(u_k|u_{k-1},\tilde{\theta})
\right\}
du_{\mathscr{T}+1:t}dr_{\mathscr{T}+1:t}
$$
where $u_\mathscr{T}$, $\chi$ and $\tilde\theta$ are given. In addition,  we have  
\begin{align*}
&\mathbb{E}\left[
g(\tilde{Z},\tilde{X},\omega(\xi)|\tilde{z} \right] =
\int g(\tilde{\theta},r_{\mathscr{T}+1:\mathscr{T}+\mathscr{T}'},{u}_{1:\mathscr{T}},\chi,\omega(\xi))\\
&\left\{\prod_{k=\mathscr{T}+1}^{\mathscr{T}+\mathscr{T}'} p(r_k|{u}_k,\chi) p(u_k|u_{k-1},\tilde{\theta})
\right\}\tilde \pi({u}_{1:\mathscr{T}},\chi\mid r_{1:\mathscr{T}},\tilde{\theta})
du_{1:\mathscr{T}+\mathscr{T}'}dr_{\mathscr{T}+1:\mathscr{T}+\mathscr{T}'}d\chi.
\end{align*}

Using a Metropolis-within-Gibbs algorithm, we can obtain samples from a distribution $M_{\tilde{\theta}}(\tilde {X}^{n-1},\cdot)$  \cite{mv_sv} such that its invariant distribution is
\begin{align*}
\tilde{\pi}({u}_{1:\mathscr{T}},\chi|r_{1:\mathscr{T}},\tilde{\theta}) \propto \left\{\prod_{n=1}^{\mathscr{T}}p(r_n|{u}_n,\chi)p(u_n|u_{n-1},\tilde{\theta})\right\}p(\chi),
\end{align*}
similarly, we have access to samples from $K_{\tilde X}(\tilde{\theta}^{j-1},\cdot)$ with invariant distribution 
\begin{align*}
\left\{\prod_{n=1}^{\mathscr{T}}p(u_n|u_{n-1},\tilde{\theta})\right\}\tilde{\pi}(\tilde{\theta})
\end{align*}
or we can sample $\tilde{\theta}$ directly from the prior $\tilde{\pi}(\tilde{\theta})$. Thus, we can sample the inner expectation unbiasedly and subsequently estimate the outer expectation either sampling directly from $\tilde{\pi}\left(\tilde\theta\right)$ or in a MCMC fashion sampling from $K_{\tilde X}(\tilde{\theta}^{j-1},\cdot)$, as in section \ref{sec:approach}.

\subsection{Numerical results 2}

For Framework 2 we consider an almost identical setting to that of Subsubsection~\ref{subsubsec:num}. One difference is that in the second experiment we set the rebalancing period to \(\mathscr{T}'=5\) instead of \(\mathscr{T}'=1\). This choice reflects a trade-off between responsiveness and transaction costs: more frequent rebalancing allows the portfolio to adapt more rapidly to changes in the latent volatility structure, while less frequent rebalancing reduces trading costs. The selected frequency provides a practical compromise between these competing effects. Additionally, we use the truncated probability measure $ \widehat{\mathbb{P}}_{\mathtt{L}}(l)\propto (l+q)\,\log^{2}(l+q)\,2^{-l}$, with $l \in \{0,1,\dots,L_{\max}\}$, with $L_{\max}=6$ sufficiently large so that truncation effects are negligible in practice.

Figure~\ref{fig:obj_f_2} shows the evolution of the approximation of the objective function, while Table~\ref{tab:performance_multi_market_unbiased_2} summarizes the performance results for the second experiment. The conclusions are consistent with those obtained for Framework~1.
\begin{figure}[htbp]
    \centering

    \begin{subfigure}[t]{0.48\textwidth}
        \centering
        \includegraphics[width=\linewidth]{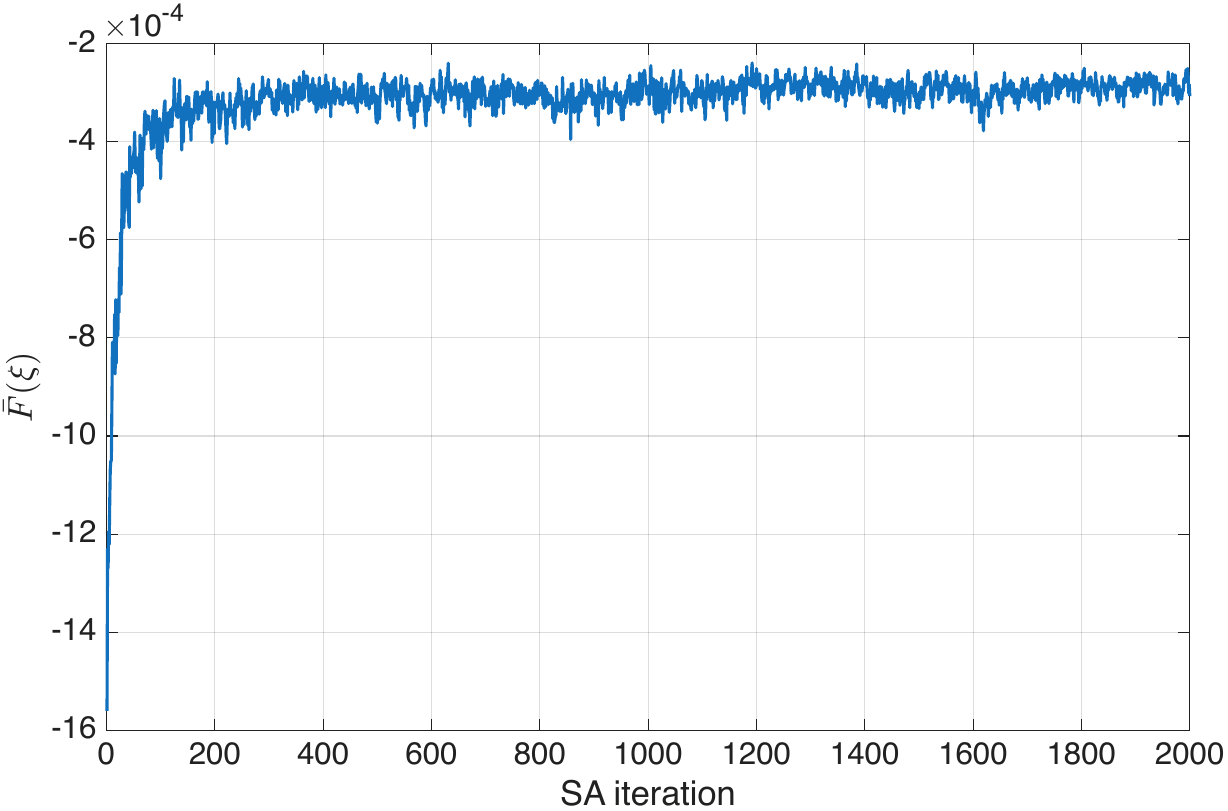}
        \caption{STOXX 600 Europe.}
        \label{fig:image1}
    \end{subfigure}
    \hfill
    \begin{subfigure}[t]{0.48\textwidth}
        \centering
        \includegraphics[width=\linewidth]{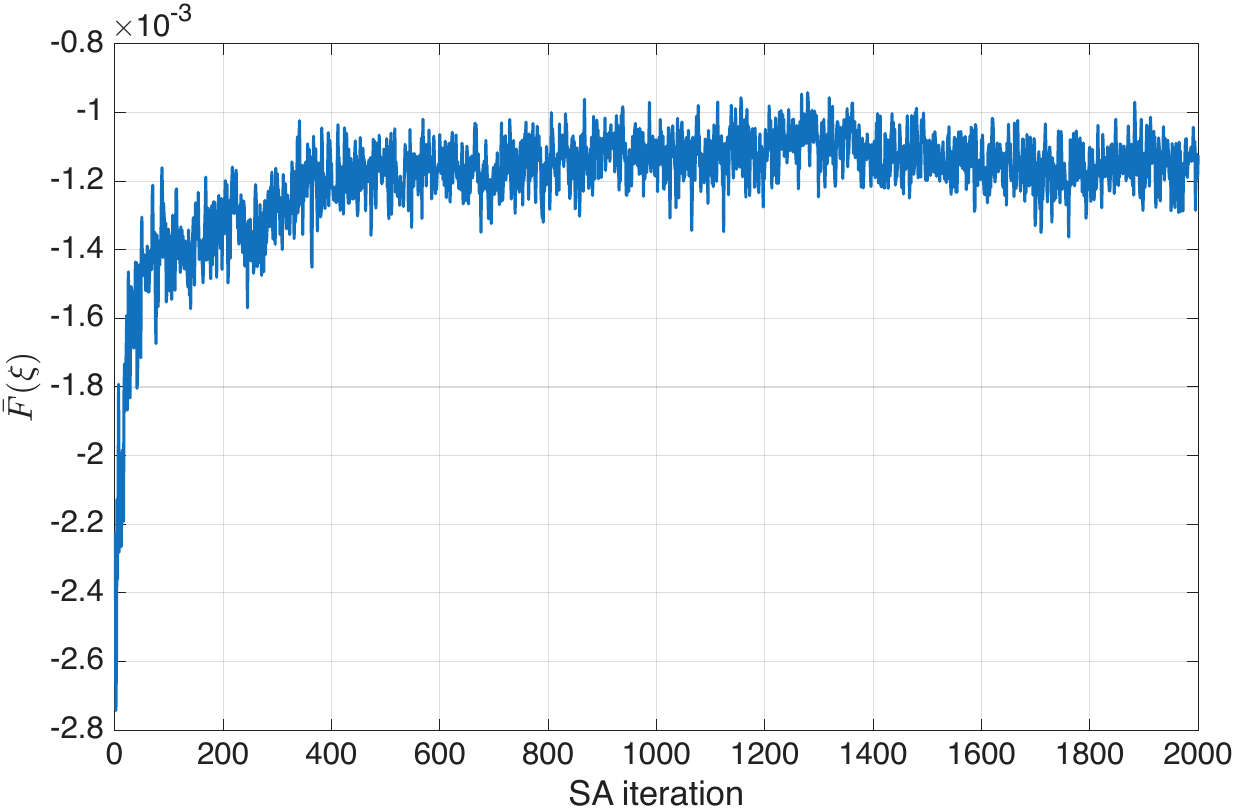}
        \caption{Tadawul.}
        \label{fig:image2}
    \end{subfigure}

    \caption{Approximation of the objective function $F(\xi)$ in terms of the number of iterations of the MSA method..}
    \label{fig:obj_f_2}
\end{figure}

\begin{table}[h!]
\centering
\small
\begin{tabular}{llcccccccc}
\toprule
Market & Strategy & FinalW & \% gain & \% loss & MDD & \% WT & Ann.R & Ann.V & Sharpe \\
\midrule

\multirow{2}{*}{\shortstack{STOXX \\Europe 600}} 
    & \shortstack{Unbiased\\ MSV-MSA} & 1.33 & 0.58 & -0.53 & 6.04 & 58.40  & 33.80 & 11.48 & 2.60 \\
& Uniform     & 1.17 & 0.49 & -0.50 & 6.43 & 56.80& 16.94 & 9.86 & 1.64 \\

\midrule

\multirow{2}{*}{Tadawul} 
&\shortstack{Unbiased\\ MSV-MSA} & 1.26&   0.60 & -0.65 &  11.17 &  59.60 &  25.81  &13.40&   1.78 \\
& Uniform     & 1.15 &  0.60 &  -0.63 &  13.21 &  56.00  &  15.48 &  13.31 &   1.15  \\

\bottomrule
\end{tabular}
\caption{Performance metrics for the unbiased MSV-MSA strategy under Framework 2.}
\label{tab:performance_multi_market_unbiased_2}
\end{table}

\appendix

\section{Proof of Proposition \ref{prop:1}}\label{app:prf_prop}

\begin{proof}
Throughout the proof $C$ is a generic constant and any important dependencies are mentioned if needed.
We begin by noting that by (A\ref{ass:1}) 1.~for any bounded and measurable $\varphi:\mathsf{X}\rightarrow\mathbb{R}$ we have for any $r\geq 1$ that
\begin{equation}\label{eq:prf1}
\mathbb{E}[|[\nu_z^{N_l-N_{l-1}}-\nu_z](\varphi)|^r]^{1/r} \leq \frac{C\|\varphi\|}{(N_l-N_{l-1}+1)^{\tfrac{1}{2}}}
\end{equation}
where $[\nu_z^{N_l-N_{l-1}}-\nu_z](\varphi)=\nu_z^{N_l-N_{l-1}}(\varphi)-\nu_z(\varphi)$,  
$\nu_z(\varphi)=\int_{\mathsf{X}}\varphi(x)\nu(x|z)dx$, $\|\varphi\|=\sup_{x\in\mathsf{X}}|\varphi(x)|$, $C$ is a constant  that does not depend on $l$ and $\mathbb{E}$ is the expectation w.r.t.~the law of the Markov chain in \textbf{MC}.
\eqref{eq:prf1} can be established using the Poisson equation for Markov chains,  see for example \cite{non_lin} and the references therein.  Then using the Minkowski inequality we have for any $l\in\mathbb{N}$
\begin{equation}\label{eq:prf2}
\mathbb{E}[|[\nu_z^{N_{1:l}}-\nu_z](\varphi)|^r]^{1/r} \leq C\|\varphi\|\sum_{p=1}^{l} \frac{(N_p-N_{p-1}+1)}{N_l(N_p-N_{p-1}+1)^{\tfrac{1}{2}}} \leq C\|\varphi\| 2^{-l/2}.
\end{equation}

To prove the result,  it is enough to show that for any $(i,j,s)\in\{1,\dots,k\}^3$ (see e.g.~\cite{ub_filt} and the references therein):
$$
\sum_{l=2}^{\infty} \frac{1}{\mathbb{P}_{\texttt{L}}(l)}
\mathbb{E}\Bigg[
\Big\{
\nu_z^{N_{1:l}}(\nabla g(z,\cdot,\xi)^{(i,j)})\nabla f\left(z,\nu_z^{N_{1:l}}(g(z,\cdot,\xi))\right)^{(s)} - 
$$
$$
\nu_z^{N_{1:l-1}}(\nabla g(z,\cdot,\xi))^{(i,j)}\nabla f\left(z,\nu_z^{N_{1:l-1}}(g(z,\cdot,\xi))\right)^{(s)}
\Big\}^2
\Bigg] <+\infty.
$$
We note that 
$$
\frac{1}{\mathbb{P}_{\texttt{L}}(1)}\mathbb{E}\left[
\left\{\nu_z^{N_1}(\nabla g(z,\cdot,\xi))^{(i,j)}\nabla f\left(z,\nu_z^{N_1}(g(z,\cdot,\xi)),z\right)^{(s)}\right\}^2\right]
$$
is trivially finite,  so this is why we do not consider it.  Now we have that
$$
\mathbb{E}\Bigg[
\Big\{
\nu_z^{N_{1:l}}(\nabla g(z,\cdot,\xi)^{(i,j)})\nabla f\left(z,\nu_z^{N_{1:l}}(g(z,\cdot,\xi))\right)^{(s)} - 
\nu_z^{N_{1:l-1}}(\nabla g(z,\cdot,\xi))^{(i,j)}\nabla f\left(z,\nu_z^{N_{1:l-1}}(g(z,\cdot,\xi))\right)^{(s)}
\Big\}^2
\Bigg]
$$
$$
\leq 2(T_1+T_2)
$$
where
\begin{align*}
T_1 & = \mathbb{E}\Bigg[
\Big\{
\nu_z^{N_{1:l}}(\nabla g(z,\cdot,\xi)^{(i,j)})\nabla f\left(z,\nu_z^{N_{1:l}}(g(z,\cdot,\xi))\right)^{(s)} - 
\nu_z(\nabla g(z,\cdot,\xi)^{(i,j)})\nabla f\left(z,\nu_z(g(z,\cdot,\xi))\right)^{(s)}\Big\}^2\Bigg] \\
T_2 & = \mathbb{E}\Bigg[
\Big\{
\nu_z^{N_{1:l-1}}(\nabla g(z,\cdot,\xi)^{(i,j)})\nabla f\left(z,\nu_z^{N_{1:l-1}}(g(z,\cdot,\xi))\right)^{(s)} - 
\nu_z(\nabla g(z,\cdot,\xi)^{(i,j)})\nabla f\left(z,\nu_z(g(z,\cdot,\xi))\right)^{(s)}\Big\}^2\Bigg].
\end{align*}
We note that bounding $T_1$ and $T_2$,  which is what we will do,  is essentially the same,  so we only give the proof for $T_1$.  We have that
$$
T_1 \leq 2(T_3+T_4)
$$
where 
\begin{align*}
T_3 & = \mathbb{E}\Bigg[
\Big\{
\{\nu_z^{N_{1:l}}(\nabla g(z,\cdot,\xi)^{(i,j)})-\nu_z(\nabla g(z,\cdot,\xi)^{(i,j)})\}\nabla f\left(z,\nu_z^{N_{1:l}}(g(z,\cdot,\xi))\right)^{(s)}\Big\}^2\Bigg] \\
T_4 & =  
\mathbb{E}\Bigg[
\Big\{
\nu_z(\nabla g(z,\cdot,\xi)^{(i,j)})
\{\nabla f\left(z,\nu_z^{N_{1:l}}(g(z,\cdot,\xi))\right)^{(s)}-
\nabla f\left(z,\nu_z(g(z,\cdot,\xi))\right)^{(s)}\}\Big\}^2\Bigg].
\end{align*}
For $T_3$ one can use (A\ref{ass:1}) 2.~along with \eqref{eq:prf2} to deduce that
$$
T_3 \leq C 2^{-l}.
$$
For $T_3$ one can use (A\ref{ass:1}) 2. ~and (A\ref{ass:1}) 2-3 ~along with the $C_2-$inequality $k$ times and \eqref{eq:prf2} to deduce that
$$
T_4 \leq C 2^{-l}.
$$
From here the proof is easily completed and this concludes the argument.
\end{proof}

\end{document}